\numberwithin{equation}{section}
\newcommand{\R}{\mathbb{R}}
\newtheorem{thm}{Theorem}[section]
\newtheorem{lem}[thm]{Lemma}
\theoremstyle{definition}
\newtheorem{ass}{Assumption}
\newtheorem{rem}[thm]{Remark}
\theoremstyle{remark}
\numberwithin{equation}{section}
\begin{document}
\title{A PDE Perspective on Generative Diffusion Models}

\author[K. Liu and E. Zuazua]{Kang Liu$^1$}
\address{$^1$Universit\'e Bourgogne Europe, CNRS, Institut de Mathematiques de Bourgogne, 21000 Dijon, France.}

\author{Enrique Zuazua$^2$$^3$$^4$}
\address{$^2$Friedrich\ -\ Alexander\ -\ Universit\"at Erlangen\ -\ N\"urnberg, Department of Mathematics, Chair for Dynamics, Control, Machine Learning, and Numerics (Alexander von Humboldt Professorship), 91058 Erlangen, Germany.}
\address{$^3$Universidad Aut\'onoma de Madrid, Departamento de Matem\'aticas, 28049 Madrid, Spain.}
\address{$^4$Chair of Computational Mathematics, Fundaci\'on Deusto, 48007 Bilbao, Basque Country, Spain.}
\email{kang.liu@u-bourgogne.fr, enrique.zuazua@fau.de}

\thanks{E. Zuazua was partially supported by the European Research Council (ERC) under the European Union's Horizon Europe research and innovation programme (grant agreement No.~101096251-CoDeFeL); by the Alexander von Humboldt Professorship program; the European Union's Horizon Europe MSCA project ModConFlex (HORIZON-MSCA-2021-DN-01, project 101073558); the Transregio 154 Project ``Mathematical Modelling, Simulation and Optimization Using the Example of Gas Networks'' of the DFG; the AFOSR 24IOE027 project; the SURE-AI Norwegian Centre for Sustainable, Risk-Averse, and Ethical AI grant 357482, Research Council of Norway; by the Grant PID2023-146872OB-I00-DyCMaMod of MICIU (Spain) and by the COST Actions CA24122 -- Multiscale Stochastics, Patterns, and Analysis of Combinatorial Environments and CA24136 -- Interactions between Control Theory and Machine Learning.}

\keywords{Fokker--Planck equation, energy estimate, entropy method, asymptotic behavior, diffusion models, generative AI}

\subjclass[2020]{34D05, 35B35, 35Q68, 35Q84, 68T99}

\begin{abstract}
Score-based diffusion models have emerged as a powerful class of generative methods, achieving state-of-the-art performance across diverse domains.
Despite their empirical success, the mathematical foundations of those models remain only partially understood, particularly regarding the stability and consistency of the underlying stochastic and partial differential equations governing their dynamics.

In this work, we develop a rigorous partial differential equation (PDE) framework for score-based diffusion processes.
Building on the Li--Yau differential inequality for the heat flow, we prove well-posedness and derive sharp $L^p$-stability estimates for the associated score-based Fokker--Planck dynamics, providing a mathematically consistent description of their temporal evolution.
Through entropy stability methods, we further show that the reverse-time dynamics of diffusion models concentrate on the data manifold for compactly supported data distributions and a broad class of initialization schemes, with a concentration rate of order $\sqrt{t}$ as $t \to 0$.

These results yield a theoretical guarantee that, under exact score guidance, diffusion trajectories return to the data manifold while preserving imitation fidelity. Our findings also provide practical insights for designing diffusion models, including principled criteria for score-function construction, loss formulation, and stopping-time selection.
Altogether, this framework provides a quantitative understanding of the trade-off between generative capacity and imitation fidelity, bridging rigorous analysis and model design within a unified mathematical perspective.
\end{abstract}

\maketitle

\section{Introduction}

The development of generative models, one of the most dynamic areas of contemporary artificial intelligence (AI), aims to endow machines with the ability to create new, realistic samples that are statistically consistent with a given dataset drawn from an unknown distribution.
In the context of image generation, this corresponds to producing novel images that belong to the same underlying class as those in the training set.
More broadly, generative models seek to approximate and sample from complex, high-dimensional data distributions across diverse domains, including text, audio, video, molecular structures, physical fields, and even solutions of partial differential equations (PDEs). 
A central challenge lies in achieving a balance between \emph{imitation fidelity} and \emph{generative diversity}.

Formally, we consider a collection of samples drawn from an unknown data distribution $u_0 \in \mathcal{P}(\mathbb{R}^d)$, where $d$ denotes the data dimensionality and $\mathcal{P}(\mathbb{R}^d)$ is the set of probability measures. 
For instance, $d = 256 \times 256$ for grayscale images, $d = 256 \times 256 \times 3$ for RGB images, and much higher for multimodal data such as audio signals, molecular coordinates, or discretized physical fields in scientific simulations. 
The learning objective is to construct a generator that produces new samples statistically consistent with $u_0$. 
Classical nonparametric density estimation methods, such as \emph{kernel density estimation}~\cite{silverman2018density}, face severe challenges in high-dimensional settings due to the \emph{curse of dimensionality}~\cite[Chp.~4]{wand1994kernel}, which renders them impractical for modern data domains.

To overcome these limitations, deep generative models have been developed over the past decade. 
Among them, \emph{diffusion models}~\cite{ho2020denoising,song2021scorebased} have emerged as a particularly powerful class. 
These models construct generators capable of sampling from $u_0$ by learning an approximation of its \emph{score function} and modeling the reverse-time dynamics of an underlying diffusion process~\cite{anderson1982reverse}.
The analytical objective of this work is to rigorously characterize, within a PDE framework, the \emph{well-posedness}, \emph{stability}, and \emph{concentration} properties of these generative dynamics.

More concretely, diffusion models consist of two complementary processes.
The forward noising process is governed by a \emph{Fokker--Planck} (FP) or heat equation, while the reverse-time generative process corresponds to its backward evolution. 
In this framework, the learned score function serves as a velocity field driving the backward FP equation, or equivalently, as the drift term in the corresponding reverse-time \textit{stochastic differential equation} (SDE). 
The generation procedure thus begins with samples drawn from a Gaussian prior and evolves backward under the influence of the learned score function, gradually transforming random noise into structured data supported on the target manifold.

This intrinsic mathematical structure reveals a deep and previously unexplored connection between modern generative AI and classical analysis, and it motivates the PDE-based theoretical framework developed in this work.

Our main contribution is to establish a rigorous analytical framework for score-based diffusion models by leveraging the \emph{Li--Yau} differential inequality \cite{li1986parabolic} for the heat flow, an essential result in geometric analysis that provides sharp pointwise differential estimates for positive solutions of the heat equation on Riemannian manifolds.
In our context, this inequality yields unilateral bounds on the divergence of the score field, ensuring the well-posedness and sharp $L^p$-stability of the backward FP dynamics that govern data generation.
Building on this foundation, we employ entropy stability methods \cite{villani2009hypocoercivity} to demonstrate that, when guided by the exact score, the diffusion model trajectories concentrate on the original data manifold for a wide class of initial generation distributions and every finite terminal time.
These results provide a mathematically rigorous explanation for the generative consistency of diffusion models.

Beyond their theoretical significance, our findings offer practical guidelines for the design and training of diffusion-based architectures, including the formulation of loss functions, the choice of stopping time, and strategies to balance imitation fidelity with generative capacity.
More broadly, this work opens a dialogue between PDE theory and machine learning, demonstrating how tools from classical analysis can illuminate and guide the design of modern AI systems.
The proposed framework thus not only advances the mathematical understanding of diffusion models but also outlines a broader analytical foundation for generative AI.

\subsection{Organization}
Section~\ref{sec:prelimiaires} provides the preliminaries of diffusion models and summarizes the key ideas underlying our contributions. 
The main stability and concentration results for the backward FP dynamics, together with comparisons to related works, are presented in Section~\ref{sectionSC}. 
In Section~\ref{sec:discussion}, we discuss the interplay between imitation fidelity and generative capacity. 
Section~\ref{sec:proof} contains the detailed proofs of the main results stated in Section~\ref{sectionSC}. 
Finally, Section~\ref{sec:conclusion} concludes the paper and outlines directions for future research.

\section{Preliminaries on score-based diffusion models and main results}\label{sec:prelimiaires}

\subsection{Overview of diffusion models and main results} Given a time horizon $0<T<\infty$ and the ambient space $\R^d$, we first  introduce the \emph{forward heat equation}:
\begin{equation}\label{eq:heat}
\begin{cases}
\partial_t u(x,t) - \Delta u(x,t) = 0,
& (x,t) \in Q,\\[1ex]
u(\cdot,0) = u_0 \in \mathcal{P}(\R^d).
\end{cases}
\end{equation}
Here, $Q$ denotes the space-time cylinder
\[
    Q = \R^d \times (0,T], \qquad
    \mathring{Q} = \R^d \times (0,T),
\]
and $\mathcal{P}(\R^d)$ is the set of probability measures in $\R^d$.

The solution of this forward  heat equation is given by the convolution of the initial datum with the heat kernel:
\begin{equation}\label{eq:heat_solution}
  u(x,t) = (G_t * u_0)(x),
  \qquad (x,t)\in Q,
\end{equation}
where for \((x,t)\in \R^d\times \R_+\), 
\[
  G_t(x) = (4\pi t)^{-d/2} \exp \Bigl(-\frac{\|x\|^2}{4t}\Bigr).
\]

In the context of diffusion models, $u_0$ represents the (unknown) density of probability of the data distribution.

Obviously, since \(u_0 \in \mathcal{P}(\mathbb{R}^d)\) (and is therefore positive), it follows that \(u > 0\) everywhere for any \(t > 0\). This allows us to introduce the \emph{score function} associated with the heat flow \eqref{eq:heat}:
\begin{equation}\label{eq:score}
  s(x,t) = \nabla \log u(x,t) = \frac{\nabla u(x,t)}{u(x,t)}, \qquad (x,t)\in Q.
\end{equation}
Since \(u\) is smooth and strictly positive for \(t>0\), the score is well-defined and smooth \(s\in \mathcal{C}^\infty(Q;\R^d)\).

This score function allows us to redefine the \emph{backward heat equation} in a way that its intrinsic instability can be better understood, a fact that will be employed to analyze the generative capacity of diffusion models.

For this purpose, it is sufficient to observe that the heat equation in \eqref{eq:heat},
when one of its solutions $u=u(x, t)$ is given, 
can be equivalently rewritten as
\begin{equation}\label{scorePDE}
\partial_t u + \epsilon\, \Delta u - (1+\epsilon)\,\mathrm{div} \big(s\,u \big) = 0,
\end{equation}
with the score velocity field $s=s(x, t)$ as in \eqref{eq:score}, and $\epsilon \ge 0$.

Note that in this new formulation, the sense of the diffusion has been reversed ($\epsilon >0$) or it was simply suppressed $\epsilon=0$. Therefore, \eqref{scorePDE} is expected to be a well-posed FP or convection-diffusion model (when $\epsilon >0$) or a hyperbolic transport model (when $\epsilon=0$) in the backward sense of time.  As mentioned above, the score velocity field $s(x,t)$ is smooth. However, the actual dynamic of the backward
model \eqref{scorePDE} depends sensitively on the available bounds on its divergence.

The first main contribution of this paper is to show that the necessary bounds follow from the classical Li--Yau inequality~\cite{li1986parabolic} for positive solutions of the heat equation, ensuring that
\begin{equation}\label{intro_eq:Li-Yau-1}
    \Delta \log(u) \;\geq\; -\frac{d}{2t}, \qquad (x,t)\in Q,
\end{equation}
which, in terms of the score field, can be rewritten as :
\begin{equation}\label{intro_eq:Li-Yau-2}
    \operatorname{div} s \;\geq\; -\frac{d}{2t}, \qquad (x,t)\in Q .
\end{equation}

Our second contribution, building on Li--Yau's inequality, is the derivation of sharp \(L^p\) estimates for the backward flow \eqref{scorePDE}, as stated in Theorem \ref{thm:energy_score}. Moreover, we establish an entropy stability analysis based on the Kullback--Leibler (KL) divergence between solutions with different terminal conditions.

As we shall see, these estimates play a fundamental role in understanding the generative process of diffusion models.
In such models, new data samples are obtained by solving the backward SDE associated with the learned score function
with \eqref{scorePDE}:
\begin{equation}\label{intro_eq:generation}
    \begin{cases}
        dX_t = -(1+\epsilon)\, s(X_t, t)\,dt + \sqrt{2 \epsilon}\, dW_t, 
        \quad t \in (0, T], \\[6pt]
        X_T \sim v_T,
    \end{cases}
\end{equation}
where $(W_t)_{t\ge0}$ is a standard Brownian motion, and $v_{T}$ is a prescribed probability measure, typically Gaussian.
The generated sample is obtained from the trace at $t = 0$ of the solution to~\eqref{intro_eq:generation}, which is computed using a suitable numerical integration scheme.

Our analysis establishes sharp bounds on the obtained traces. It elucidates their connection to the data manifold encoded by the initial probability density \(u_0\), thereby allowing a rigorous quantification of the imitation and generation capacities of diffusion models.

 In particular, the imitation capacity is quantified through the concentration of the solutions of the SDE~\eqref{intro_eq:generation} as \(t \to 0\). We prove that the probability of the generated flow \(X_t\) approaching the data manifold \(\operatorname{supp}(u_0)\) is equal to \(1\), see Theorem \ref{thm:support}. In the deterministic ODE case (i.e., \(\epsilon = 0\) in~\eqref{intro_eq:generation}), when the data measure \(u_0\) is a finite sum of Dirac masses, we further quantify the convergence rate to be proportional to \(\sqrt{t}\), see Theorem \ref{thm:convergence_rate}.

The main tool we employ to prove this concentration is an entropy stability analysis for the FP equation, inspired by the hypocoercivity framework for kinetic equations~\cite[Chap.~1.6]{villani2009hypocoercivity}. The derivation of explicit convergence rates further relies on a detailed analysis of the non-autonomous and singular gradient flow system~\eqref{intro_eq:generation}.

In this article, we focus on the heat equation, without including an additional transport term (such as $-\operatorname{div}(x\,u(x,t))$) that appears in the forward dynamics of certain diffusion models~\cite{ho2020denoising,song2019generative}.
However, our analysis extends naturally to that setting, since such dynamics are equivalent to the heat equation under a self-similar change of variables, see~\cite{barenblatt1996scaling,zuazua2020asymptotic} and Section \ref{sec:compare}.

\subsection{Novelty of the present work}
The main contributions of this work are summarized as follows:
\begin{itemize}
    \item \textit{PDE-based theoretical foundation.} Our work provides a rigorous PDE framework for score-based diffusion models. Using the Li--Yau estimate for the heat flow, we establish a well-posed and $L^p$-stable formulation of the score-based FP equation, valid for any strictly positive time.

    \item \textit{Entropy-based concentration.} Through an entropy stability analysis, we prove that the reverse-time solution concentrates on the original data manifold under weak assumptions (in particular, compactly supported data distributions), thus ensuring convergence for any finite terminal horizon.

    \item \textit{Empirical measures and explicit rates.} In the deterministic and empirical setting, we derive an explicit $\sqrt{t}$ convergence rate of the generated flow toward the true data samples, which is particularly sharp for empirical measures.

    \item \textit{Learned versus empirical score analysis.} We extend our concentration estimates to the empirical score, showing that the resulting diffusion model exhibits a high imitation capacity but limited generative ability. We then quantify the deviation between the samples generated through the empirical score and those obtained via the learned score function $s_\theta$, offering quantitative insight into the model's generative capability.

    \item \textit{Training guidelines and hyperparameter tuning.} Our stability results provide practical guidance for the training process, clarifying how the score field should be adjusted and how hyperparameters (such as the time horizon and viscosity coefficient $\epsilon$) should be tuned to achieve an optimal balance between imitation and generation.
\end{itemize}

\section{Stability and concentration}\label{sectionSC}
In this section, we present the main results we achieve in terms of the stability of the diffusion model and its concentration/imitation capacity.

We consider the \emph{backward Fokker--Planck equation} associated with the generative process \eqref{intro_eq:generation}:
\begin{equation}\label{eq:FP}
\begin{cases}
\partial_t v(x,t) + \epsilon\, \Delta v(x,t) - (1+\epsilon)\,\mathrm{div} \big( s(x,t)\,v(x,t) \big) = 0,
& (x,t) \in \mathring{Q}, \\[1ex]
v(x,T) = v_T(x), & x \in \mathbb{R}^d,
\end{cases}
\end{equation}
where $\epsilon \geq 0$ is a hyperparameter, $v_T \in \mathcal{P}(\mathbb{R}^d) $, and $s=s(x, t)$ denotes the score function~\eqref{eq:score}  associated with the solution $u=u(x,t)$ of the heat equation \eqref{eq:heat}, with initial datum $u_0$, the probability density of the unknown distribution under consideration.

Note that in practical applications, $v_T$ is often taken to be a Gaussian. In particular, $v_T \ge 0$ and $\int_{\R^d} v_T(x) dx =1$. Its role is to lead, by sampling, to the initialization at time $t=T$ of the backward SDE dynamics \eqref{intro_eq:generation} aimed at dynamically generating new samples at $t=0$.
So, we emphasize that $v_T$ is not related to the solution $u(T)$ of the forward heat equation \eqref{eq:heat} starting from the unknown distribution $u_0$ or its empirical approximation. However, according to the classical results on the asymptotic behavior of the \eqref{eq:heat}, we know that as $T \to \infty$, $u(T)$ in the forward heat equation approximates $G_T$, see \cite{zuazua2020asymptotic}.

\subsection{$L^p$ stability}
As noted above, the Li--Yau inequality \cite{li1986parabolic} implies that the score vector field $s = \nabla \log u$ in \eqref{eq:score} associated with any nonnegative initial condition $u_0 \geq 0$ satisfies \eqref{intro_eq:Li-Yau-2}. Combining this fact with an energy estimate for \eqref{eq:FP} yields the following stability bound.

\begin{thm}[Energy estimate of the score-based FP equation]\label{thm:energy_score}
    Let $v$ be the solution of \eqref{eq:FP}. Then,  
    \begin{equation}\label{eq:energy_score}
    \|v(t)\|_{L^p}
\;\le\; 
\left(\frac{\, T}{t}\right)^{\frac{d(1+\epsilon)(p-1)}{2 p}}\,\|v_T\|_{L^p}, \qquad \forall\, t\in(0,T],\;\; p\in[1,\infty),
\end{equation}
for any fixed \(\epsilon\geq 0\).
\end{thm}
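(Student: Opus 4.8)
The plan is to derive the bound via a standard $L^p$ energy estimate for the backward Fokker--Planck equation \eqref{eq:FP}, using the Li--Yau divergence bound \eqref{intro_eq:Li-Yau-2} as the only input on the score field. First I would work with the time-reversed variable $w(x,\tau) := v(x, T-\tau)$ so that \eqref{eq:FP} becomes a forward-in-$\tau$ convection-diffusion equation $\partial_\tau w - \epsilon\Delta w + (1+\epsilon)\mathrm{div}(s\, w) = 0$ on $\tau \in [0,T)$, with $w(\cdot,0) = v_T$. (One should treat $p=1$ separately: there the $L^1$ norm is simply conserved, i.e. mass is preserved, since the right-hand exponent vanishes; for $p>1$ proceed as follows.) Multiply the equation by $p\,|w|^{p-2}w$ (one may assume $w \geq 0$ by the maximum principle, or regularize $|w|^p$) and integrate over $\R^d$. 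The diffusion term $-\epsilon\int \Delta w \cdot p|w|^{p-2}w = \epsilon p(p-1)\int |w|^{p-2}|\nabla w|^2 \geq 0$ has a favorable sign and can be discarded. For the convection term, integration by parts gives
\begin{equation*}
(1+\epsilon)\int_{\R^d} \mathrm{div}(s\,w)\, p|w|^{p-2}w\, dx = -(1+\epsilon)(p-1)\int_{\R^d} s \cdot \nabla(|w|^p)\, dx = (1+\epsilon)(p-1)\int_{\R^d} (\mathrm{div}\, s)\, |w|^p\, dx,
\end{equation*}
where I integrate by parts once more, moving the gradient back onto $s$.

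Combining these, $\frac{d}{d\tau}\|w(\tau)\|_{L^p}^p \leq -(1+\epsilon)(p-1)\int (\mathrm{div}\,s)|w|^p\,dx$. Now invoke Li--Yau in the form $\mathrm{div}\, s(x,t) \geq -\tfrac{d}{2t}$ at time $t = T-\tau$, which yields $\frac{d}{d\tau}\|w(\tau)\|_{L^p}^p \leq \frac{(1+\epsilon)(p-1)d}{2(T-\tau)}\|w(\tau)\|_{L^p}^p$. This is a Grönwall-type differential inequality; integrating $\tfrac{d}{d\tau}\log\|w(\tau)\|_{L^p}^p \leq \frac{(1+\epsilon)(p-1)d}{2(T-\tau)}$ from $\tau = 0$ to $\tau = T - t$ produces the factor $\big(\tfrac{T}{t}\big)^{(1+\epsilon)(p-1)d/2}$ on $\|w\|_{L^p}^p$, hence $\big(\tfrac{T}{t}\big)^{d(1+\epsilon)(p-1)/(2p)}$ after taking $p$-th roots. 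Translating back to $v$ gives precisely \eqref{eq:energy_score}.

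The main technical obstacle is \emph{justifying the formal computation}: the energy identity, the two integrations by parts (no boundary terms at infinity), and the differentiability of $\tau \mapsto \|w(\tau)\|_{L^p}^p$ all require adequate decay and integrability of $w$, $\nabla w$ and of the score $s$. Near $\tau = T$ (i.e. $t\to 0$) the coefficient $\mathrm{div}\, s$ is singular, and $v_T$ is only assumed to be a probability density (possibly not in $L^p$ for $p>1$, or not smooth), so the statement should implicitly be read for $v_T \in L^p$. I would handle this by a standard approximation/regularization argument: smooth and truncate $v_T$, work on a large ball with Neumann or absorbing conditions (or use that the heat-kernel-smoothed $u$ makes $s$ smooth with controlled growth on $[\,\delta, T]$), derive the estimate for the regularized problems where all manipulations are licit, and pass to the limit using lower semicontinuity of the $L^p$ norm and the uniform bound. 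A secondary point worth checking is the sign/structure of the $|w|^{p-2}w$ test function when $w$ changes sign; invoking the maximum principle to get $w \geq 0$ when $v_T \geq 0$ (which holds by hypothesis) sidesteps this cleanly. Everything else is routine Grönwall.
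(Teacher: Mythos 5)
Your proposal is correct and follows essentially the same route as the paper: multiply by $v^{p-1}$ (or $p|w|^{p-2}w$ in your time-reversed variable, which is the same up to a harmless constant), integrate by parts twice on the transport term to bring out $\operatorname{div} s$, discard the sign-favourable diffusion term, invoke the Li--Yau lower bound $\operatorname{div} s \ge -d/(2t)$, and integrate the resulting Gr\"onwall inequality in $\log t$. The time-reversal is purely cosmetic, and your closing remarks on justifying the formal computation (decay of boundary terms, the implicit assumption $v_T\in L^p$, nonnegativity of $v$) are reasonable caveats that the paper itself passes over quickly.
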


\begin{rem}[Backward well-posedness] The backward heat equation is a prototypical ill-posed problem, as it lacks continuous dependence on the terminal condition. In fact, applying the Fourier transform to the backward heat equation shows that the solution exhibits extremely rapid growth in the high-frequency regime \cite{engl1996regularization}. Nevertheless, by the Li--Yau estimate and Theorem \ref{thm:energy_score}, the backward score-based FP equation \eqref{eq:FP}, although originated as a reinterpretation of the heat equation, is well-posed for any strictly positive initial time $t > 0$. In particular, the solution at time $t>0$ depends continuously on the terminal condition in the $L^p$ sense and blows up when $t$ approaches $t\sim 0$ only. 
\end{rem}

\begin{rem}[Sharpness of the estimate]
We first note that the blow-up rate of the order of $1/t$ in the Li--Yau estimate is optimal. This can be readily verified, for instance, when the initial data $u_0$ is a finite sum of Dirac measures, so that $u$ is the corresponding solution of the heat equation, a linear combination of finitely many Gaussian kernels (see Figure~\ref{fig:score_density}). The estimate in~\eqref{eq:energy_score} is also sharp in this setting for $\epsilon = 0$. Indeed, for the terminal distribution $v_T = u(T)$, the uniqueness of solutions to the backward FP equation implies that $v \equiv u$. As $t \to 0$, the solution $v(t)$ thus converges to $u_0$, a finite linear combination of Dirac measures. Consequently, the blow-up in~\eqref{eq:energy_score} manifests in any $L^p$-norm with $p > 1$, following precisely the rate given in~\eqref{eq:energy_score} for $\epsilon = 0$.

Note that the blow-up estimate in~\eqref{eq:energy_score} deteriorates as $\epsilon$ increases, even though the diffusion term regularizes the flow in the backward sense of time. Indeed, when $\epsilon$ increases, the impact of the singularity of the score function at $t\sim 0$ is enhanced. Thus, the $L^p$ estimate near $t\sim 0$ deteriorates as well. This fact is related to the enhanced generative power of the diffusion model as the viscosity parameter $\epsilon$ increases.

For $p = 1$, the positivity and total mass of $v_T$ are preserved; hence, the $L^1$-norm remains constant, excluding any blow-up as $t \to 0$. However, the potential convergence of $v(t)$ does not occur in the $L^1$ sense, but rather in the Wasserstein sense.

From a numerical standpoint, however, the blow-up described in~\eqref{eq:energy_score} is naturally mitigated by time discretization, which regularizes the singular behavior near $t = 0$.
\end{rem}

\begin{figure}[htbp]
  \centering
  \includegraphics[width=\textwidth]{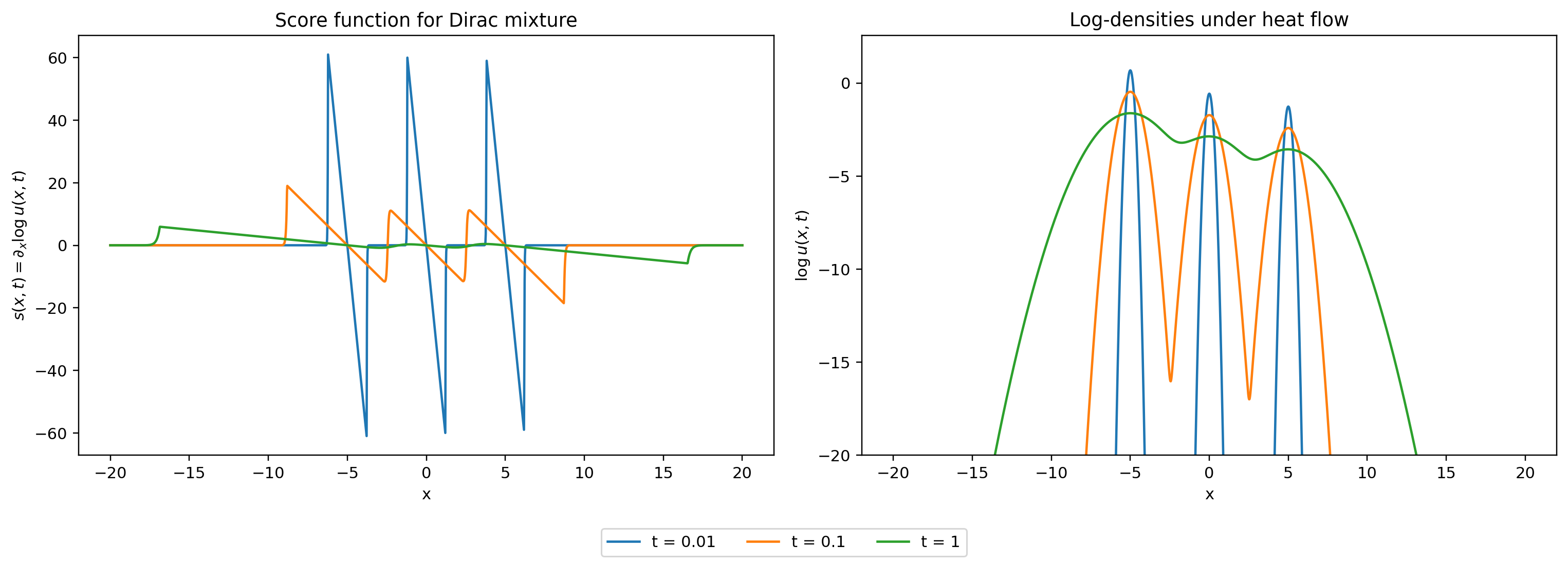}
  \caption{Score function (left) and log-density (right) of the heat flow originating from the initial distribution 
$u_0 = 0.7\,\delta_{-5} + 0.3\,\delta_{0} + 0.1\,\delta_{5}$. 
The singular behavior predicted by the right-hand side of the Li--Yau estimate as $t \to 0$ 
is evident from the steep slopes of the score function near the Dirac locations of the initial data. The right panel also shows that the local maxima of the log-densities occur at these Dirac positions as $t\to 0$.}
\label{fig:score_density}
\end{figure}

\subsection{Imitation capacity of the diffusion model}
The solution of \eqref{eq:FP} yields the probability density of the solutions of the SDE \eqref{intro_eq:generation} and, in particular, the probability density of the generated samples at $t=0$.

To analyze the imitation capacity of the diffusion model, it is important to analyze whether the solution of 
\eqref{intro_eq:generation} converges to the data manifold, i.e.\ the support of the initial distribution $u_0$, denoted by $\operatorname{supp}(u_0)$. 
We address this critical issue by performing an entropy stability analysis of the FP equation, drawing inspiration from hypocoercivity methods~\cite[Chp.~1.6]{villani2009hypocoercivity}.

Recall that the KL divergence (or relative entropy) between two 
probability measures $m_1, m_2 \in \mathcal{P}(\mathbb{R}^d)$ is defined by
\begin{equation}\label{eq:KL-def}
     \mathrm{KL}(m_1 \,\|\, m_2) \;=\; 
     \begin{cases}
         \displaystyle \int_{\mathbb{R}^d} \log \!\left(\frac{m_1(x)}{m_2(x)}\right)\, d m_1(x), 
         & \text{if } m_1 \ll m_2, \\[2ex]
         +\infty, & \text{otherwise},
     \end{cases}
\end{equation}
where $m_1 \ll m_2$ means that $m_1$ is absolutely continuous with respect to $m_2$.

As we shall see  in Lemma \ref{lem:KL},  the KL divergence between \(v\) (the solution of~\eqref{eq:FP}) and \(u\) (the reference solution of the heat equation defining the score function) decays when viewed backward in time:
\begin{equation}
   \mathrm{KL}\big(v(t_1)\,\|\, u(t_1)\big) \;\leq\;  \mathrm{KL}\big(v(t_2)\,\|\, u(t_2)\big), 
   \qquad \forall\, 0 <t_1\leq t_2 \leq T.
\end{equation}
This estimate is stronger than the classical $L^1$-contraction property, since the $L^1$-distance can be bounded in terms of the KL divergence (see Pinsker's inequality~\cite[Rem.~22.12]{villani2008optimal}), while the converse implication does not hold.

Based on the previous uniform bound on the KL divergence, we can establish the concentration of the support of \(v(t)\) towards that of $u_0$, under the following assumption:

\begin{ass}\label{ass1}
Recall that \(u_0\) is the initial distribution of the heat equation \eqref{eq:heat} and $v_T$ is the initial distribution of the backward generative process \eqref{intro_eq:generation}. Denote by $\mathcal{L}^d$ the Lebesgue measure on $\R^d$.  
Assume that:
\begin{enumerate}
    \item The initial distribution \(u_0\) has compact support.
    \item The measure \(v_T\) is absolutely continuous with respect to \(\mathcal{L}^d\), with density \(v_T > 0\) almost everywhere, and satisfies
    \begin{equation*}
        v_T \log v_T \in L^1(\mathbb{R}^d), 
        \qquad 
        |x|^2 v_T \in L^1(\mathbb{R}^d).
    \end{equation*}
\end{enumerate}
\end{ass}

\begin{thm}[Concentration of support]\label{thm:support}
Let Assumption \ref{ass1} hold. Let $v(t)$ and $X_t$ be the solutions of the FP equation \eqref{eq:FP} and the corresponding SDE \eqref{intro_eq:generation}, respectively.
Then, the following holds:
\begin{enumerate}
\item[\emph{(i)}]
 For any sequence \(t_n \to 0^+\), the family \(\{v(t_n)\}_{n \geq 1}\) is precompact in the weak-* topology on \(\mathcal{P}(\mathbb{R}^d)\) and any weak-* limit point \(v^*\) satisfies
    \begin{equation}
        \operatorname{supp}(v^*) \subset \operatorname{supp}(u_0).
    \end{equation}
    Furthermore, this inclusion becomes an equality when \(|\log v_T(x)|\) grows polynomially at infinity.
\item[\emph{(ii)}] For any open set $U\supset \operatorname{supp}(u_0)$,
\begin{equation}
\lim_{t \to 0^+} v(t)(U) = 1,\quad 
\lim_{t\to 0^+}\,\mathbb{P}\!\left(X_t\in U\right)=1,    
\end{equation}
where \(\mathbb{P}\) denotes the probability law induced by the random variable \(X_t\).
 \end{enumerate}
\end{thm}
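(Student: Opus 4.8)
\emph{Strategy.} Everything rests on two inputs: the uniform relative-entropy bound of Lemma~\ref{lem:KL} and Gaussian tail estimates for the heat flow $u$ coming from the compact support of $u_0$. Write $K:=\operatorname{supp}(u_0)$, fix $R$ with $K\subset B_R$, and put $K_\delta:=\{x\in\R^d:\operatorname{dist}(x,K)<\delta\}$. Since $K$ is compact, every open $U\supset K$ contains some $K_\delta$, so it suffices to prove (i)--(ii) with $U$ of the form $K_\delta$. I shall also use, as stated in the text, that $v(t)$ is the law of $X_t$ (equation \eqref{eq:FP} being the forward Kolmogorov equation of \eqref{intro_eq:generation}).

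\emph{Finiteness of relative entropy and concentration of $v$ (proof of (ii)).} From $u(T)=G_T*u_0$ and $K\subset B_R$ one has $(4\pi T)^{-d/2}e^{-(|x|+R)^2/(4T)}\le u(T)(x)\le(4\pi T)^{-d/2}$, hence $|\log u(T)(x)|\le C(1+|x|^2)$; together with Assumption~\ref{ass1}(2) this gives $C_0:=\mathrm{KL}(v_T\,\|\,u(T))<\infty$, and Lemma~\ref{lem:KL} yields $\mathrm{KL}(v(t)\,\|\,u(t))\le C_0$ for all $t\in(0,T]$. The compact support of $u_0$ also gives the pointwise bound $u(x,t)=\int_K G_t(x-y)\,u_0(dy)\le(4\pi t)^{-d/2}e^{-\operatorname{dist}(x,K)^2/(4t)}$, whence $u(t)(\R^d\setminus K_\delta)\to0$ as $t\to0^+$. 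Now apply the log-sum inequality $\mathrm{KL}(\mu\|\nu)\ge\mu(A)\log\frac{\mu(A)}{\nu(A)}+\mu(A^c)\log\frac{\mu(A^c)}{\nu(A^c)}$ with $\mu=v(t)$, $\nu=u(t)$, $A=\R^d\setminus K_\delta$: the left side is $\le C_0$ while $u(t)(A)\to0$, and since the $A^c$-term is bounded below this forces $v(t)(A)\to0$, i.e. $v(t)(K_\delta)\to1$. Consequently $\mathbb{P}(X_t\in U)=v(t)(U)\to1$ as well, which is (ii).

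\emph{Precompactness and the inclusion (first part of (i)).} Since $v(t)(K_1)\to1$ and $\overline{K_1}$ is compact, $\{v(t_n)\}_{n\ge1}$ is tight (the finitely many small-$n$ terms are individually tight), hence weak-$*$ precompact by Prokhorov, with every limit a probability measure. If $v(t_{n_k})\rightharpoonup v^*$, the portmanteau theorem applied to the closed sets $\overline{K_\delta}$ gives $v^*(\overline{K_\delta})\ge\limsup_k v(t_{n_k})(\overline{K_\delta})=1$ for each $\delta>0$; letting $\delta\downarrow0$ and using continuity of $v^*$ from above (with $\bigcap_{\delta>0}\overline{K_\delta}=K$) yields $v^*(K)=1$, i.e. $\operatorname{supp}(v^*)\subset\operatorname{supp}(u_0)$.

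\emph{Equality under polynomial growth — the main obstacle.} The key observation is that $u$ itself solves \eqref{eq:FP} with terminal datum $u(T)$ (this is the reinterpretation \eqref{scorePDE}), so $u$ and $v$ solve the same linear equation with drift $-(1+\epsilon)s$; repeating the entropy computation behind Lemma~\ref{lem:KL} with the roles of $u$ and $v$ exchanged gives $\frac{d}{dt}\mathrm{KL}(u(t)\,\|\,v(t))=\epsilon\int u\,|\nabla\log(u/v)|^2\ge0$, hence $\mathrm{KL}(u(t)\,\|\,v(t))\le\mathrm{KL}(u(T)\,\|\,v_T)=:C_1$ for all $t\in(0,T]$ (here $v(t)>0$ a.e., by Assumption~\ref{ass1}(2) when $\epsilon=0$ and by the strong maximum principle when $\epsilon>0$); and $C_1<\infty$ precisely because the Gaussian tails of $u(T)$ absorb the polynomial growth of $|\log v_T|$ — this is exactly where the extra hypothesis enters. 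Now fix $x_0\in\operatorname{supp}(u_0)$ and $r>0$; since $u_0(B_{r/2}(x_0))>0$ and $\int_{|z|<r/2}G_t\to1$, one gets $u(t)(B_r(x_0))\ge\tfrac12 u_0(B_{r/2}(x_0))=:c_0>0$ for all small $t$. Applying the log-sum inequality with $\mu=u(t)$, $\nu=v(t)$, $A=B_r(x_0)$ and using $\mathrm{KL}(u(t)\,\|\,v(t))\le C_1$: if $v(t)(B_r(x_0))\to0$ along some sequence $t\to0^+$ then the $A$-term $\to+\infty$ while the $A^c$-term stays bounded below, contradicting $\le C_1$; hence $\liminf_{t\to0^+}v(t)(B_r(x_0))>0$. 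By the portmanteau theorem every limit point $v^*$ then satisfies $v^*(\overline{B_r(x_0)})>0$ for every $r>0$, so $x_0\in\operatorname{supp}(v^*)$; since $x_0$ was arbitrary, $\operatorname{supp}(v^*)=\operatorname{supp}(u_0)$. The routine ingredients are the relative-entropy and tightness/portmanteau bookkeeping above; the delicate point is recognizing that the \emph{reverse} relative entropy $\mathrm{KL}(u(t)\,\|\,v(t))$ obeys the same monotonicity and that its finiteness at $t=T$ is governed exactly by the polynomial-growth condition, which is then converted, via the log-sum inequality, into a uniform lower bound on the $v(t)$-mass near every point of $\operatorname{supp}(u_0)$.
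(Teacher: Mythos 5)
Your proposal is correct but takes a genuinely different route from the paper. The paper proceeds in the order (i)\(\Rightarrow\)(ii): it first establishes tightness via the Donsker--Varadhan variational formula combined with the uniform exponential moment bound~\eqref{eq:moment-bound} from Lemma~\ref{lem:gaussian-bounds}, then deduces \(\operatorname{supp}(v^*)\subset\operatorname{supp}(u_0)\) from weak-\(*\) lower semicontinuity of the KL divergence (finite \(\mathrm{KL}(v^*\|u_0)\) forces \(v^*\ll u_0\)), and finally obtains (ii) by a Portmanteau contradiction. You invert the order: you prove (ii) first via the elementary log-sum (binary data-processing) inequality
\[
\mathrm{KL}(\mu\|\nu)\ \ge\ \mu(A)\log\frac{\mu(A)}{\nu(A)}+\mu(A^c)\log\frac{\mu(A^c)}{\nu(A^c)},
\]
exploiting \(u(t)(\mathbb{R}^d\setminus K_\delta)\to0\) and the uniform bound \(\mathrm{KL}(v(t)\|u(t))\le C_0\) to force \(v(t)(K_\delta)\to1\); then tightness of \(\{v(t_n)\}\) falls out for free since mass concentrates in the precompact set \(K_1\), eliminating the need for the exponential moment estimate and Donsker--Varadhan entirely; and the support inclusion follows from Portmanteau plus continuity from above along \(\overline{K_\delta}\downarrow K\). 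For the converse inclusion under polynomial growth of \(|\log v_T|\), both arguments hinge on the reverse-KL monotonicity \(\mathrm{KL}(u(t)\|v(t))\le\mathrm{KL}(u(T)\|v_T)<\infty\) from Lemma~\ref{lem:KL}, but where the paper again invokes lower semicontinuity to get \(u_0\ll v^*\), you convert the reverse-entropy bound, via the same log-sum inequality, into a quantitative \(\liminf_{t\to0^+}v(t)(B_r(x_0))>0\) for every \(x_0\in\operatorname{supp}(u_0)\), and then apply Portmanteau. Both approaches are valid and of comparable difficulty; yours is somewhat more self-contained (it replaces Donsker--Varadhan, the exponential moment bound, and lower semicontinuity of KL by the log-sum inequality and elementary measure-theoretic continuity), and it incidentally yields the slightly stronger quantitative statement that \(v(t)\)-mass near every support point of \(u_0\) stays bounded away from zero uniformly in \(t\), whereas the paper's absolute-continuity route is arguably shorter at the level of the support-equality step.
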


\begin{rem}[Gradient flows]
Given that the score function \eqref{eq:score}, which defines the deterministic ($\epsilon = 0$) or stochastic ($\epsilon > 0$) dynamics in \eqref{intro_eq:generation}, is a pure gradient field associated with the potential $\log(u)$, the corresponding particle dynamics naturally drive trajectories to concentrate (backward in time) around the local maximizers of this potential, that is, precisely on the support of $u_0$ (as illustrated in the right panel of Figure~\ref{fig:score_density}).  This is exactly what the theorem above establishes. 
\end{rem}

\begin{rem}[The inviscid case]

As we shall see below in the next subsection, in the deterministic case
($\epsilon=0$), one can quantify the convergence rate.
\end{rem}

\begin{rem}[Concentration of support]
This result shows that the solution of \eqref{eq:FP} becomes progressively concentrated on the support of $u_0$ as $t \to 0^+$. This concentration manifests both at the level of individual trajectories of the backward generative SDE and in the density function governed by the backward FP equation.

This behavior highlights the imitation capacity of the diffusion model,  namely, its ability to generate realizations that tend to remain within the support of the initial distribution $u_0$.
\end{rem}
\begin{rem}[Numerical experiment in Figure~\ref{fig:diffusion_model}]
A visualization of this concentration phenomenon is presented in Figure~\ref{fig:diffusion_model}, which illustrates the dynamics in two spatial dimensions.
We consider a true data distribution supported on a one-dimensional curve, represented by the lemniscate.

Panels (A)-(B) show trajectories of the generation flow~\eqref{intro_eq:generation} driven by the exact score drift corresponding to this full distribution. In panel (A), we observe how the trajectories of the generative SDE return to the support of $u_0$. In panel (B), we display the locations of the trajectories when the dynamics are stopped prematurely, before reaching $t = 0$. As expected, the distance to the support of $u_0$ increases with $t$, clearly illustrating how early stopping enhances the generation capacity of the model.

Panels (C)-(D) depict analogous trajectories when the generation flow~\eqref{intro_eq:generation} is driven by the empirical score drift, obtained from an approximation of the full distribution $u_0$ by the average of several Dirac realizations. In panel (C), the trajectories of the generative SDE return to the support of this empirical approximation, while panel (D) shows the state of the trajectories under early stopping. Again, we observe that the distance to the support of the initial empirical measure increases with $t$.

The contrast between the two experiments, (A)-(B) and (C)-(D), is striking. In the first case, the entire support of $u_0$ acts as an attractor of the diffusion model, faithfully capturing the geometry of the underlying distribution. In the second case, by contrast, only those realizations of $u_0$ used to estimate the empirical score serve as effective attractors. This highlights the intrinsic dependence of the generation process on the quality and completeness of the score approximation.
\end{rem}

\begin{figure}[h]
  \centering

  \begin{subfigure}[t]{0.49\textwidth}
    \centering
    \includegraphics[width=\linewidth]{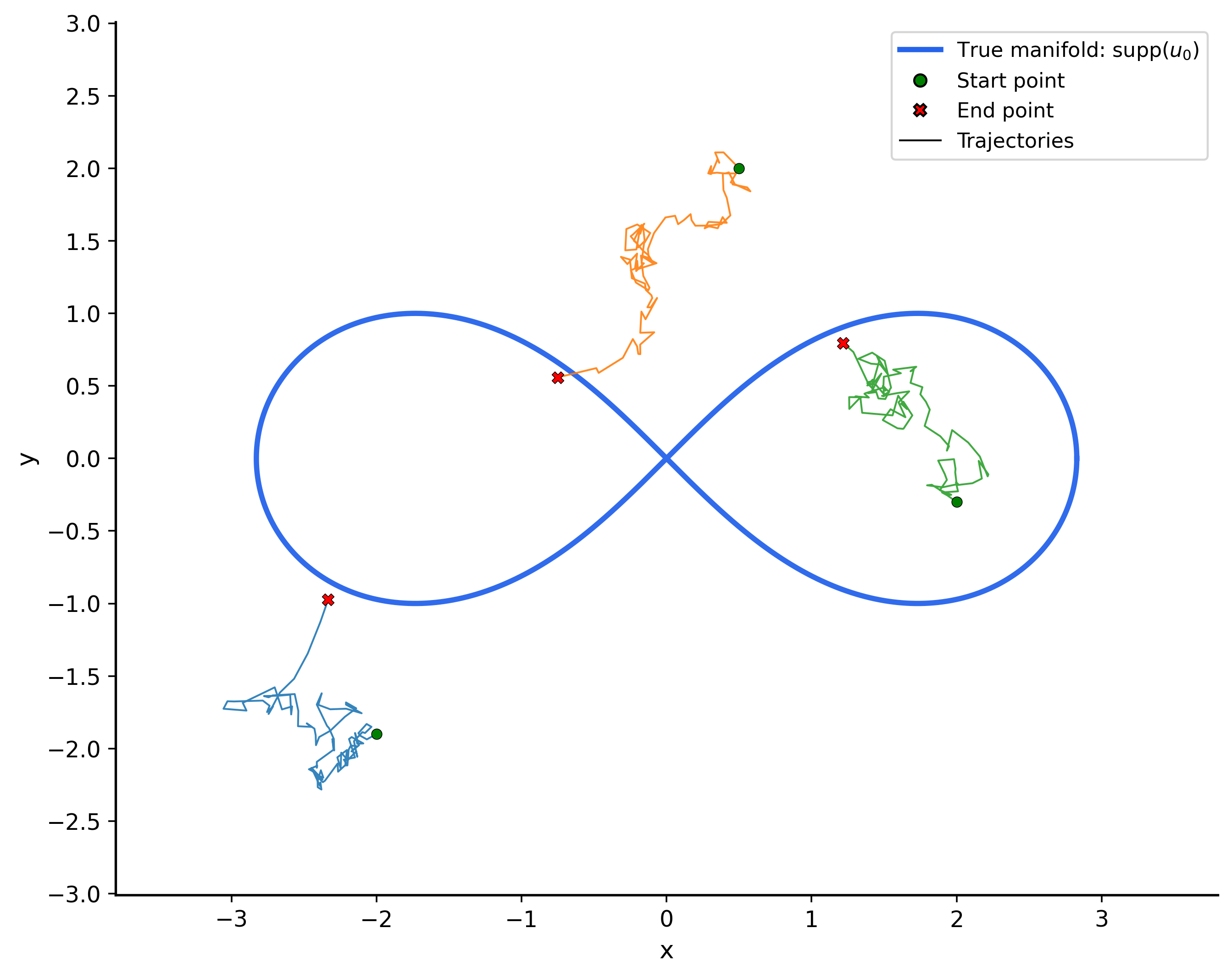}
    \caption{Three backward stochastic trajectories with the true score function.}
  \end{subfigure}
  \hfill
  \begin{subfigure}[t]{0.49\textwidth}
    \centering
    \includegraphics[width=\linewidth]{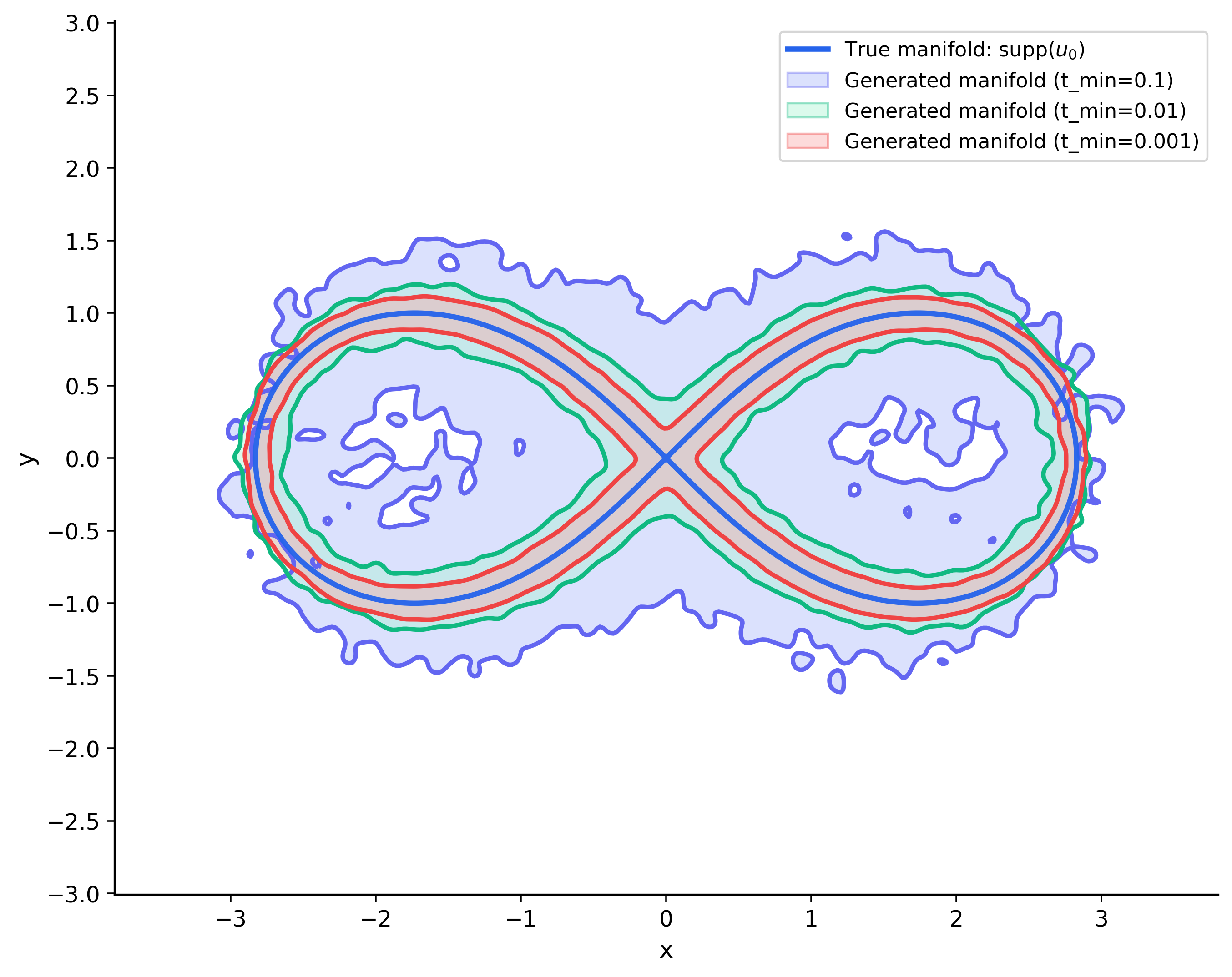}
    \caption{Early-stopping data manifolds with the true score function.}
  \end{subfigure}

  \vspace{0.6em}

  \begin{subfigure}[t]{0.49\textwidth}
    \centering
    \includegraphics[width=\linewidth]{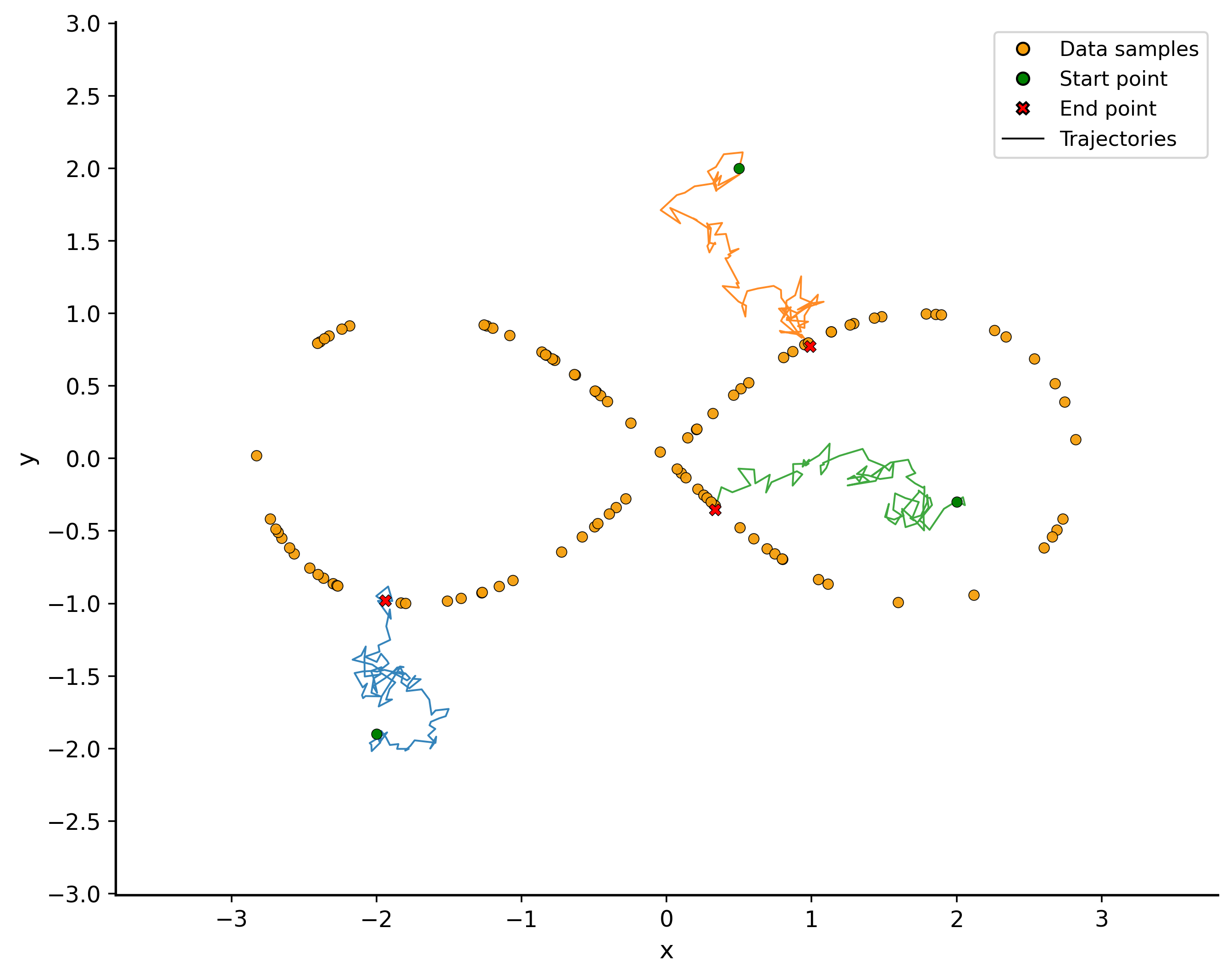}
    \caption{Three backward stochastic trajectories with the empirical score function of data samples.}
  \end{subfigure}
  \hfill
  \begin{subfigure}[t]{0.49\textwidth}
    \centering
    \includegraphics[width=\linewidth]{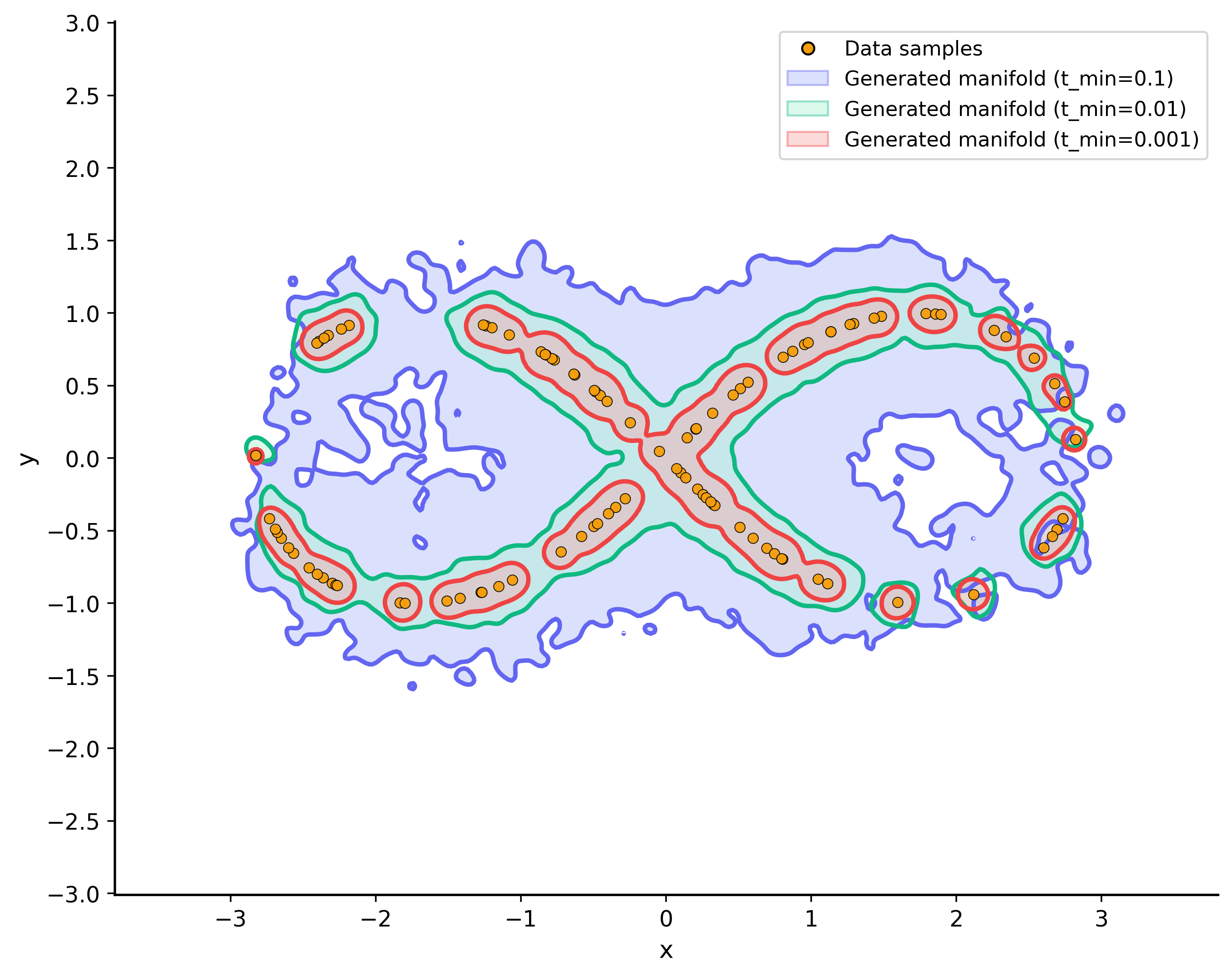}
    \caption{Early-stopping data manifolds with the empirical score function of data samples.}
  \end{subfigure}

  \caption{Score-based generation on the lemniscate dataset: comparison between the true ((A)-(B)) and empirical scores ((C)-(D)), and the effect of the choice of the stopping time \(t_{\min} \in \{0.1,\,0.01,\,0.001\}\). 
The true score  ((A)-(B)) corresponds to the solution of the heat equation, given by the convolution with the Gaussian heat kernel, computed by means of numerical quadrature, while the empirical score ((C)-(D)) is obtained from a finite sample of data points via the explicit expression~\eqref{eq:score_empirical}. 
In all experiments, we set the backward diffusion parameter to be \(\epsilon = 0.2\).  The generated manifold is obtained as the region encompassing \(95\%\) of the \(10{,}000\) reverse-time samples, reducing the effect of extreme stochastic trajectories.
}
\label{fig:diffusion_model}

\end{figure}

\begin{rem}[Empirical score function]
If the initial distribution is an empirical measure supported on the finite set $\{y_1,\dots,y_N\}$,
\begin{equation*}
u_0 \;=\; \frac{1}{N}\sum_{k=1}^N\,\delta_{y_k},
\end{equation*}
then the exact (empirical) score admits the explicit formula
\begin{equation}\label{eq:score_empirical}
s(x,t)
=\frac{1}{2t}\!\left(
\frac{\sum_{k=1}^N e^{-\frac{\|x-y_k\|^2}{4t}}\,(y_k-x)}
     {\sum_{k=1}^N e^{-\frac{\|x-y_k\|^2}{4t}}}
\right)=\frac{1}{2t}\!\left(
\frac{\sum_{k=1}^N e^{-\frac{\|x-y_k\|^2}{4t}}\,y_k}
     {\sum_{k=1}^N e^{-\frac{\|x-y_k\|^2}{4t}}}
- x\right),
\end{equation}
for all $(x,t)\in\R^d\times(0,T].$

By Theorem \ref{thm:support}, the trajectories of the backward stochastic generation SDE \eqref{intro_eq:generation}
concentrate on the finite set $\{y_1,\dots,y_N\}$ as $t\to 0^+$; in particular,
there is no genuine ``novel'' generation in this setting. This corresponds to pure imitation. 

This result helps to clarify the central role of the score function in determining the performance of diffusion models. It also offers insight into how the score can be strategically manipulated for design purposes, since a suitable modification of the score function can deliberately perturb the samples generated by the model.

In particular, different samplings of the original distribution correspond to different score functions, and therefore to different attractors in the diffusion process. This can be viewed as a limitation of the standard diffusion model, which tends to reproduce samples of the original distribution unless an early-stopping procedure is applied, as discussed above.

This observation suggests the need to adjust or redesign the score function, for instance, through neural network-based approximations that reduce its complexity and guide the diffusion dynamics toward alternative attractors, roughly corresponding to the critical points of the surrogate score function. In this way, one can mitigate the model's natural tendency toward imitation and enhance its generative capability. More details are presented in Section \ref{sec:discussion}.
\end{rem}

\begin{rem}[Necessity of structural conditions on \(v_T\)] 
The conditions imposed on \(v_T\) in Assumption~\ref{ass1}(2) are rather mild.  
Indeed, any Gaussian distribution satisfies these requirements, which are standard in the implementation of diffusion models.  
More precisely, Assumption~\ref{ass1}(2) requires that \(v_T\) admits a density that is strictly positive almost everywhere, and that
\(v_T \log v_T\) and \(\|x\|^2 v_T\) belong to \(L^1(\mathbb{R}^d)\).  
These assumptions ensure the key property, established in Lemma~\ref{lm:absolutely-continuous},
\[
    \mathrm{KL}(v_T \,\|\, u(T)) < \infty,
\]
which, together with the contraction property of the KL divergence, yields a uniform bound on the divergence for all \(t \in (0,T]\).

The necessity of these conditions can be seen from counterexamples where the generative flow fails to recover the data manifold or only recovers part of it.  
Let
\[
    u_0 = \tfrac{1}{2}\bigl( \delta_{(-1,0)} + \delta_{(1,0)} \bigr),
\]
and consider the deterministic flow case (\(\epsilon = 0\)):

\begin{enumerate}
    \item \textit{Case 1.}  
    Suppose that \(\operatorname{supp}(v_T) \subset \{(0,y) \,:\, y \in \mathbb{R}\}\).  
    Then \(v_T\) does not possess a density in \(\mathbb{R}^2\).  
    From \eqref{eq:score_empirical}, one verifies that
    \[
        -s((0,y),t) = (0, -y/(2t)), \qquad \forall\, y \in \mathbb{R}.
    \]
    Consequently, the generative flow collapses every point in \(\operatorname{supp}(v_T)\) to the single point \((0,0)\),  
    which lies outside \(\operatorname{supp}(u_0)\),
    see the middle panel of Figure~\ref{fig:convergence}.

    \item \textit{Case 2.}  
   Assume that \(v_T \ll \mathcal{L}^d\) and that \(\operatorname{supp}(v_T) \subset \mathbb{R}_+ \times \mathbb{R}\), where we lose the strict positivity of $v_T$.  
    According to the formula of the score function \eqref{eq:score_empirical}, for points in the right half-plane, the attractive influence originating from \((1,0)\) dominates that from \((-1,0)\).  
    As a result, the generative flow drives all particles within the support of \(v_T\) toward the right-hand source at \((1,0)\), thereby preventing a full reconstruction of the data manifold,  
    as illustrated in the right panel of Figure~\ref{fig:convergence}.
\end{enumerate}
\end{rem}

\begin{figure}[h]
  \centering
  \includegraphics[width=0.5\textwidth]{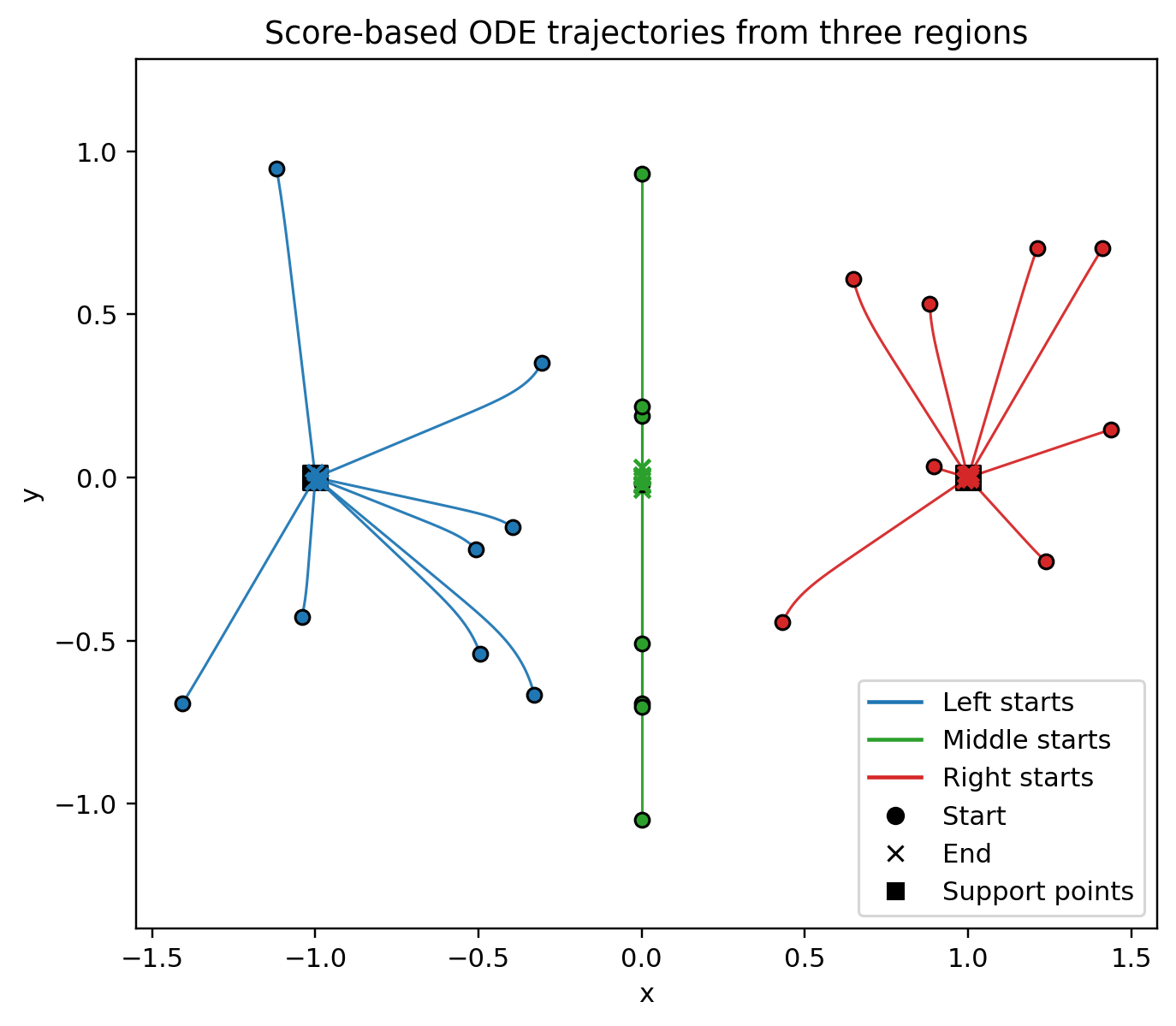}
  \caption{Trajectories of the score-based ODE ($\epsilon=0$) with $u_0 = \tfrac12\big(\delta_{(-1,0)} + \delta_{(1,0)}\big)$. 
  Initial points with $x<0$ converge to $(-1,0)$, those with $x>0$ converge to $(1,0)$, and points with $x=0$ converge to the barycenter $(0,0)$.}
  \label{fig:convergence}
\end{figure}

\subsection{Imitation rate in ODE setting}
Theorem \ref{thm:support} provides qualitative concentration results under fairly weak assumptions. Here, we study the \textit{quantitative} concentration rate for the deterministic case of the generative flow \eqref{intro_eq:generation}, i.e., $\epsilon =0$. The resulting ODE is
\begin{equation}\label{eq:ODE}
\begin{cases}
\displaystyle \frac{d  X_t}{dt} = - s(X_t,\,t), & t\in(0,T),\\[0.6em]
 X_T = x_T,
\end{cases}
\end{equation}
where \(s\) is the score function \eqref{eq:score} and $x_T$ is any point in $\R^d$.

\begin{thm}[Imitation rate]\label{thm:convergence_rate} 
Assume that \(u_0\) has compact support, and let \(X_t\) denote the solution of~\eqref{eq:ODE} with initial condition \(X_T = x_T \in \mathbb{R}^d\). Then the following statements hold:
\begin{enumerate}
 \item[\emph{(i)}] 
 Let $S = \operatorname{supp}(u_0)$. Then, for Lebesgue-almost every \(x_T \in \mathbb{R}^d\), 
    \begin{equation}\label{eq:liminf_conv}
        \liminf_{t\to 0^+} d\bigl(X_t, S\bigr) = 0.
    \end{equation}  
\item[\emph{(ii)}]   
    Let \(K = \mathrm{conv}(S)\). Then, for any $x_T\in \R^d$,
    \begin{equation}\label{eq:dist-bound}
        d\bigl(X_t, K\bigr) 
        \;\leq\;
        d\bigl(x_T, K\bigr)\,T^{-1/2}\,\sqrt{t}, \qquad
     t\in (0,T].
    \end{equation}

    \item[\emph{(iii)}]  
    Suppose that \(u_0\) is a finite sum of Dirac measures:
    \begin{equation*}
        u_0 = \sum_{k=1}^N w_k\,\delta_{y_k},
    \end{equation*}
    with weights \(w_k > 0\) and locations \(y_k \in \mathbb{R}^d\). Then, for Lebesgue-almost every \(x_T \in \mathbb{R}^d\), there exists an index \(i \in \{1, \ldots, N\}\), depending on $x_T$, such that
    \[
        \lim_{t \to 0^+} X_t = y_i.
    \]
    Moreover, there exist  a constant \(C > 0\),  depending only on \(x_T\) and the time horizon $T$, such that
    \begin{equation}\label{eq:conv_1}
        \|X_t - y_i\|
        \;\leq\;
        C\, \sqrt{t},
        \qquad \forall\, t \in (0, T].
    \end{equation}
       
\end{enumerate}
\end{thm}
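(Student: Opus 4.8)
The plan is to exploit the explicit gradient-flow structure of the score ODE together with the Li--Yau bound \eqref{intro_eq:Li-Yau-2} and a clean monotonicity computation for the squared distance to the convex hull. For part (ii), I would let $\pi_K(x)$ denote the projection onto the closed convex set $K = \mathrm{conv}(S)$ and consider $\varphi(t) = d(X_t,K)^2 = \|X_t - \pi_K(X_t)\|^2$. Since $K$ is convex, $\varphi$ is differentiable along the trajectory (the projection envelope theorem gives $\varphi'(t) = 2\langle X_t - \pi_K(X_t), \dot X_t\rangle = -2\langle X_t - \pi_K(X_t), s(X_t,t)\rangle$). The key point is a pointwise inequality: for $x \notin K$,
\[
    \langle x - \pi_K(x),\, s(x,t)\rangle \;\geq\; \frac{1}{2t}\,\|x - \pi_K(x)\|^2 .
\]
This follows from the empirical/convolution representation $s(x,t) = \tfrac{1}{2t}\big(\mathbb{E}_{Y\sim \mu_{x,t}}[Y] - x\big)$, where $\mu_{x,t}$ is the Gibbs-type probability measure with density $\propto e^{-\|x-y\|^2/4t}\,du_0(y)$ supported in $S \subset K$; since the barycenter of $\mu_{x,t}$ lies in $K$, writing $m = \mathbb{E}[Y]$ we get $\langle x - \pi_K(x), s(x,t)\rangle = \tfrac{1}{2t}\langle x-\pi_K(x), m - x\rangle = \tfrac{1}{2t}\big(\langle x-\pi_K(x), m-\pi_K(x)\rangle - \|x-\pi_K(x)\|^2\big)$, and the first inner product is $\le 0$ by the variational characterization of $\pi_K$. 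Hence $\varphi'(t) \geq \tfrac{1}{t}\varphi(t)$, which, integrated backward from $T$, yields $\varphi(t) \le (t/T)\,\varphi(T)$, i.e.\ \eqref{eq:dist-bound}. One must also check that the ODE is well-posed on $(0,T]$ with no blow-up of $X_t$ in finite time — this is automatic since $|s|$ is locally bounded on $Q$ and the above estimate already confines $X_t$ to a bounded neighborhood of $K$, so the trajectory stays in a compact set on which $s$ is Lipschitz.

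For part (i), I would argue by contradiction: if $\liminf_{t\to 0^+} d(X_t,S) = \delta > 0$ on a set of positive measure in $x_T$, then on that set $X_t$ stays at distance $\ge \delta/2$ from $S$ for all small $t$; but the flow map $x_T \mapsto X_t$ has Jacobian controlled by $\exp(\int_t^T \|\nabla s\|\,d\tau)$ from above and, crucially, by $\exp(-\int_t^T \operatorname{div} s\,d\tau) \le \exp(\int_t^T \tfrac{d}{2\tau}d\tau) = (T/t)^{d/2}$ from the Li--Yau lower bound on $\operatorname{div} s$, so the pushforward of Lebesgue measure cannot escape to infinity and must concentrate where $u_0$ lives. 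More directly, I would invoke Theorem~\ref{thm:support}(ii): taking $v_T = $ (normalized) indicator of a large ball (or any admissible density), the law of $X_t$ puts mass $\to 1$ on every open neighborhood of $S = \operatorname{supp}(u_0)$, which forces $d(X_t,S)\to 0$ in probability, hence $\liminf d(X_t,S) = 0$ for a.e.\ starting point along subsequences. To upgrade "in probability along the full law" to "a.e.\ $x_T$" one uses that $\varphi(t) \le (t/T)\varphi(T)$ already gives a deterministic bound on $d(X_t,K)$; combined with the fact (from part (iii)'s analysis in the Dirac case, or a density argument) that the flow cannot hover strictly between $K$ and $S$.

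The substantive part is (iii), the sharp $\sqrt t$ rate toward a \emph{single} Dirac $y_i$ rather than merely toward the convex hull. Here I would proceed in two stages. \emph{Stage 1 (selection of $i$):} Writing $s(x,t) = \tfrac{1}{2t}(m(x,t) - x)$ with $m(x,t) = \sum_k p_k(x,t) y_k$ and softmax weights $p_k(x,t) \propto w_k e^{-\|x-y_k\|^2/4t}$, one shows that for a.e.\ $x_T$ the trajectory enters, for $t$ small enough, a region where a unique index $i$ dominates, i.e.\ $\|X_t - y_i\| < \|X_t - y_j\|$ for all $j\ne i$; the exceptional set is the stable manifold of the "tie" configurations, which has measure zero (this is where one uses part (i) to know $X_t$ approaches $\{y_1,\dots,y_N\}$ and a transversality/Lebesgue-differentiation argument to exclude ties). \emph{Stage 2 (rate):} Once $i$ dominates, set $r(t) = \|X_t - y_i\|$. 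The competing weights satisfy $p_j/p_i \le C e^{-c/t}$ for $j\ne i$ (with $c>0$ depending on the gap $\min_{j\ne i}\|y_j - y_i\|$ once $X_t$ is within, say, $\tfrac14$ of that gap from $y_i$), so $m(X_t,t) = y_i + O(e^{-c/t})$ uniformly, and the ODE becomes
\[
    \frac{d}{dt}(X_t - y_i) \;=\; -\,\frac{1}{2t}(X_t - y_i) \;+\; O\!\big(e^{-c/t}/t\big).
\]
Comparing with the exact solution $r_0\sqrt{t/t_0}$ of the homogeneous equation and absorbing the exponentially small forcing (which is integrable against $dt/t$ near $0$ and negligible compared to $\sqrt t$), Grönwall gives $r(t) \le C\sqrt t$. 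The main obstacle I anticipate is \emph{Stage 1}: rigorously showing that the measure-zero exceptional set of initial conditions — those whose trajectories asymptotically balance between two or more Diracs — is genuinely null, since the dynamics are non-autonomous and singular at $t=0$, so standard stable-manifold theorems do not apply off the shelf; I expect one must combine the Li--Yau Jacobian bound (to control how Lebesgue measure distorts under the flow) with an explicit analysis of the "tie set" $\{x : \|x - y_i\| = \|x - y_j\| = \min_k \|x-y_k\|\}$ as $t\to 0$, showing it is a finite union of lower-dimensional pieces and that the flow's dependence on $x_T$ is smooth enough that its preimage stays null.
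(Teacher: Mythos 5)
Your part~(ii) is essentially the paper's argument verbatim: the Lyapunov function $\tfrac12 d(\cdot,K)^2$, the Gaussian mean-shift representation $s=(m-x)/2t$ with $m\in K$, and the variational characterization of the projection. (Your boxed ``key inequality'' $\langle x-\pi_K(x),s\rangle\ge\tfrac1{2t}\|x-\pi_K(x)\|^2$ has the wrong sign --- your own derivation two lines later gives $\le -\tfrac1{2t}\|x-\pi_K(x)\|^2$, which is the one that produces $\varphi'\ge\varphi/t$ --- but the computation, not the stated inequality, is what you use, so the result is correct.) Your part~(i) contains the right ingredient (Theorem~\ref{thm:support}(ii) plus Borel--Cantelli along a subsequence, then absolute continuity of the law of $x_T$ with respect to Lebesgue), but the writing blurs it: a normalized ball indicator violates Assumption~\ref{ass1}(2) since it is not a.e.\ positive (take $\mathcal{N}(0,I_d)$, as the paper does, or at least cover $\mathbb{R}^d$ by balls), and the final sentence about ``upgrading'' with a density argument and the ``flow cannot hover'' claim is both unnecessary for the $\liminf$ statement and unjustified --- the paper explicitly records the full-limit version as an open problem.

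The genuine gap is in part~(iii), Stage~1, and you correctly anticipate that you cannot close it. You pose the selection of $i$ as a stable-manifold/transversality problem: show that the set of $x_T$ whose trajectories asymptotically ``tie'' between two Diracs is Lebesgue-null, by combining Li--Yau Jacobian control of the flow with an analysis of tie sets as $t\to0$. This line of attack would be painful in a non-autonomous, singular-at-$t=0$ setting, and it is not what the paper does. The paper avoids the difficulty with a much more elementary two-step device. First, part~(i) already gives, for a.e.\ $x_T$, a \emph{subsequence} $t_n\to0$ with $d(X_{t_n},S)\to0$; since $S=\{y_1,\dots,y_N\}$ is finite, a further subsequence converges to a \emph{single} $y_i$ --- no argument is required to rule out ties at this stage. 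Second, and this is the decisive idea you are missing, the shrunken Voronoi cell
\[
V_i(\gamma)=\Bigl\{x:\|x-y_j\|^2-\|x-y_i\|^2\ge\gamma\ \ \forall j\neq i\Bigr\},\qquad \gamma=\tfrac12\min_{j\neq i}\|y_i-y_j\|^2,
\]
is invariant in backward time for $t$ small: once $X_{t_\ast}\in V_i(\gamma)$ (which is guaranteed at some $t_\ast$ in the subsequence because $y_i\in\mathring V_i(\gamma)$), the functions $\psi_j(x)=\|x-y_j\|^2-\|x-y_i\|^2$ satisfy $\tfrac{d}{dt}\psi_j(X_t)<0$ on the boundary level set $\psi_j=\gamma$, by exactly the Varadhan-type exponential-smallness estimate $\|m(x,t)-y_i\|\le C_i e^{-\gamma/4t}$ that you also identify. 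So the trajectory can never leave $V_i(\gamma)$ as $t$ decreases, which both fixes the index $i$ uniquely and supplies the uniform bound on $\|m(X_t,t)-y_i\|$ needed for the Gr\"onwall step. In short: you do not need to classify the tie set or estimate the distortion of Lebesgue measure under the flow; the positive-width Voronoi trapping region plus subsequential convergence from part~(i) already forces the full limit. (A further small slip in your Stage~2: the linearized ODE should read $\tfrac{d}{dt}(X_t-y_i)=+\tfrac1{2t}(X_t-y_i)+O(e^{-c/t}/t)$, not with a minus sign --- with the correct sign the homogeneous solution decays like $\sqrt{t}$ as $t\to0$, which is what you want.)
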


\begin{rem}
We provide several remarks on Theorem~\ref{thm:convergence_rate}. 
\begin{itemize}
\item Point~(i) establishes a qualitative convergence result for the lower limit of $d(X_t,S)$, for almost every \(x_T\). 
The result may be expected to hold as a full limit, as a consequence of the a priori estimates of the score function from Lemma~\ref{lem:gaussian-bounds}. But this is an open problem.

\item Point~(ii) presents a quantitative convergence result toward the convex hull of the support of \(u_0\), which holds for any \(x_T\). And this result is sharp, in some sense.

Indeed, as illustrated in Figure~\ref{fig:convergence},  when \(x_T\) lies on the \(y\)-axis, the trajectories converge to \((0,0)\), a point contained in the convex hull of \((-1,0)\) and \((1,0)\), but not in the initial support of the empirical measure.

 This result does not contradict Point~(i), since \(S \subset K\). Thus,  once a trajectory enters \(K\) (making the distance to $K$ vanish) at some \(t>0\), it may continue to evolve toward certain points in \(S\). 

\item Finally, Point~(iii) provides a quantitative convergence result to \(S\), but only in the empirical setting. 
Extending point (iii) to the non-empirical case would require additional geometric assumptions on \(S\), which we leave for future work.
\end{itemize}\end{rem}

\begin{rem}[Concentration rate in the stochastic case]
The convergence rate established in Theorem~\ref{thm:convergence_rate} addresses the deterministic generation flow associated with the empirical score function  ($\epsilon=0$). A natural next step is to extend this analysis to the stochastic generation flow with $\epsilon>0$. Near $t = 0$, the diffusion term becomes negligible compared to the drift term, which is of order \(\mathcal{O}(1/t)\) near \(t = 0\); hence, the stochastic dynamics can be interpreted as a small random perturbation of the deterministic system.
In this regime, one would expect large deviation principles (see, e.g.,~\cite{freidlin2012random}) to provide a quantitative description of the probabilistic concentration rate.

For instance, for the stochastic generation process \eqref{intro_eq:generation}, where \(\sqrt{2\epsilon}>0\) is the standard deviation of the Brownian motion, one would anticipate estimates of the form
\[
\mathbb{P}\bigl(\|X_t - y_i\| \leq C\sqrt{t}\bigr) \;\geq\; 1 - \varepsilon(\epsilon, t),
\]
where \(\varepsilon(\epsilon, t)\) represents the probability of large deviations, vanishing  as \(\epsilon \to 0\) or \(t \to 0\).

This direction remains largely unexplored and clearly warrants further investigation.
\end{rem}

\subsection{Related work and our contribution}\label{sec:compare}

We begin by briefly reviewing the state of the art in deep learning--based generative models, and then compare our results with existing theoretical analyses of diffusion models.

\subsubsection{State of the Art}

The earliest successful deep generative models include the \textit{variational autoencoder} (VAE)~\cite{kingma2014auto} and the \textit{generative adversarial network} (GAN)~\cite{goodfellow2014generative}. 
The VAE introduces a probabilistic latent-variable framework that learns an approximate likelihood of the data distribution via variational inference, whereas the GAN formulates data synthesis as a two-player minimax game between a generator and a discriminator.  
While both frameworks have achieved  empirical success, VAEs typically yield blurred reconstructions due to their reliance on tractable variational bounds, and GANs often exhibit instability and mode collapse, capturing only a subset of the data manifold.

In recent years, diffusion-based generative models have emerged as a mathematically principled and empirically effective alternative to earlier approaches, exhibiting remarkable performance in practical  applications.
Among these, a widely studied class is that of score-based models~\cite{song2019generative,song2021scorebased}, which constitutes the main focus of our work.  
Another prominent framework is the \textit{denoising diffusion probabilistic model} (DDPM)~\cite{ho2020denoising}, which has been shown to be mathematically equivalent to the score-based formulation, as discussed in Section~\ref{sec:discussion}.

\subsubsection{Comparison and Novelty}
Theoretical analyzes of score-based diffusion models have been developed in, for example,~\cite{lee2022convergence,chen2023sampling,benton2024nearly,conforti2025kl} and the references therein. We compare our results with these works.

In \cite{lee2022convergence,chen2023sampling,benton2024nearly,conforti2025kl}, the authors prove the imitation property for diffusion models governed by an \emph{Ornstein--Uhlenbeck} FP equation, where the heat equation is perturbed by a confining transport term:
\begin{equation}\label{eq:OU}
\begin{cases}
\partial_t w - \Delta w - \mathrm{div}(x w)= 0, & (x,t) \in Q,\\[1ex]
w(\cdot,0) = w_0.
\end{cases}
\end{equation}

It is important to note that, applying the standard self-similar change of variables \cite{zuazua2020asymptotic},
\[
u(x,t) = (2t+1)^{-d/2}\,w\!\left( \frac{x}{\sqrt{2t+1}}, \frac{\log (2t+1)}{2}\right),
\quad (x,t) \in Q,
\]
 \(u\) satisfies the heat equation \eqref{eq:heat} with the initial condition \(u_0 = w_0\).

Therefore, our results can be easily adapted to that setting as well, providing some interesting generalizations.
In particular, in \cite{lee2022convergence,chen2023sampling,benton2024nearly,conforti2025kl}, the corresponding backward FP dynamics is considered with \(\varepsilon=1\) and
\[
v^T(T) = \mathcal{N}(0,I_d),
\]
since the standard Gaussian distribution is the \emph{stationary} solution of \eqref{eq:OU}. 

Our analysis yields more general imitation results, valid for any \(\varepsilon \geq 0\) and for a broad class of terminal distributions \(v^T(T)\) satisfying Assumption~\ref{ass1}(2); in particular, all Gaussian measures.

The works \cite{chen2023sampling,benton2024nearly,conforti2025kl} provide upper bounds for 
\(\mathrm{KL}(v^T(t)\|w(t))\) in terms of \(\mathrm{KL}(v^T(T)\|w(T))\) and the mismatch between the score functions, under various structural assumptions.
In the case of an exact score (no mismatch), these results yield the contraction of the KL divergence, consistent with our Lemma~\ref{lem:KL} and Remark~\ref{rem:score-matching}.
The work of \cite{lee2022convergence} focuses instead on the $\chi^2$-divergence, leading to analogous conclusions.

Despite this shared contraction property, the interpretation and form of the imitation results differ substantially between the existing literature and our work.
In \cite{chen2023sampling,benton2024nearly,conforti2025kl}, imitation is understood in a strict sense; namely, \(v^T(0)=w_0\).
This holds in the asymptotic regime \(T\to\infty\), which ensures  that \(\mathrm{KL}(\mathcal{N}(0,I_d)\|w(T))\to0\).
The main assumptions and conclusions of these references can be summarized as follows:
\begin{enumerate}
    \item \textit{Exact imitation at \(t=0\).}  
    The work of \cite{chen2023sampling} assumes that the score function in \eqref{eq:OU} has a globally Lipschitz gradient, while \cite{conforti2025kl} assumes that the initial distribution \(w_0\) (which coincides with \(u_0\) in our notation) has finite \textit{relative Fisher information} with respect to $\mathcal{N}(0,I_d)$.
    In both cases, 
    \[
    \lim_{T\to\infty} \mathrm{KL}(v^T(0)\|w_0) = 0.
    \]
    Under these assumptions, there is no blow-up of the score function near \(t=0\), and convergence holds even at \(t=0\).
    \item \textit{Early stopping.}  
    In \cite{benton2024nearly}, the only assumption is that \(w_0\) has a finite second moment.
    Under this weaker condition,
    \[
    \lim_{T\to\infty} \mathrm{KL}(v^T(t)\|w(t)) = 0, \quad \forall\, t>0.
    \]
     A similar result is also proved in the second part of \cite{conforti2025kl}.
    This asymptotic equality holds for any fixed positive time, corresponding to the ``early stopping'' strategy in diffusion models.
    However, this equality cannot, in general, be extended to \(t=0\) without stronger regularity assumptions on \(w_0\), due to the blow-up behavior of \(v^T(t)\) as \(t\to0^+\).
    \item \textit{Finite-sample case.}  
    When \(w_0\) is a finite sum of Diracs, it automatically falls into the previous category.
    Building on this, \cite{li2024good} leverages the result of \cite{benton2024nearly} to prove the convergence of the generated distribution toward the empirical measure without early stopping, i.e., at \(t=0\).
    Again, this convergence is understood in the limit \(T\to\infty\).
    Empirical evidence of this imitative behavior in the finite-sample case is reported in~\cite{carlini2023extracting,somepalli2023understanding}.
\end{enumerate}

\medskip
In contrast, our results reveal a fundamentally different imitation property.
Rather than aiming to exactly recover the initial distribution \(w_0\), we focus on the concentration behavior of \(v(t)\) as \(t \to 0^+\), which captures its support convergence toward the data manifold.
This concentration holds for any \(T>0\) and any \(v_T\) satisfying mild assumptions.
Our results imply that:
\begin{enumerate}
    \item \textit{Support imitation at \(t=0\) for any finite \(T\).}  
    We assume that \(w_0\) has compact support. This is a more realistic assumption than in \cite{lee2022convergence,chen2023sampling,li2024good}, since, for instance,  for image data, each pixel's RGB value lies within a bounded cube.
    Moreover, \(v^T(T)\) can be any probability measure satisfying Assumption~\ref{ass1}(2), which is much weaker than the Gaussian terminal condition used in previous works. 
    In Theorem~\ref{thm:support}, we prove that for any \(T>0\),
    \[
    \lim_{t\to 0 ^+} \mathbb{P}(X^T_t\in U) = 1, 
    \]
    where \(X_t^T\) is a trajectory associated with \(v^T(t)\), and \(U\) is any open neighborhood of \(\mathrm{supp}(w_0)\).
    In our setting, the terminal time \(T\) does not need to tend to infinity; it can be any positive finite horizon.
    This shows that the blow-up of the score function indeed drives the flow toward the data manifold at an increasingly fast rate as \(t\to 0^+\).
    \item \textit{Finite-sample case.}  
    For the finite-sample setting, Theorem~\ref{thm:convergence_rate} provides an explicit concentration rate of order \(\sqrt{t}\) for the above convergence, which reproduces the qualitative results of \cite{li2024good} and provides theoretical support for the findings of \cite{gu2023on,yi2023generalization}.
\end{enumerate}

\section{From imitation to generation}\label{sec:discussion}

As we have seen, in diffusion models, stability guaranties that the backward denoising dynamics can reliably invert the forward diffusion process and recover the original data manifold.
However, enforcing strict stability often limits the expressive capacity of the learned dynamics, thereby constraining the model's generative diversity.
An effective diffusion model must, therefore, strike a balance between accurate reconstruction of the training data and flexible generation of novel, realistic samples.

In what follows, we discuss several strategies to mitigate over-imitation during the training of diffusion models and enhance their generative capability.

\subsection{Implicit regularization by neural networks}
We now turn to the training procedure of diffusion models and highlight how neural networks implicitly regularize the learned score function. 

Assume we are given samples $\{y_i\}_{i=1}^N$ drawn from an unknown data distribution $u_0$. 
Rather than directly constructing the empirical score function associated with these samples, one typically trains a neural network to approximate the (unknown) true score function $s(x,t) = \nabla_x \log u(x,t)$. 
This leads to the standard score-matching formulation~\cite{vincent2011connection}:
\begin{equation}\label{intro_pb:score-matching}
    \inf_{\theta \in \mathbb{R}^P} 
    \int_{0}^T \!\! \int_{\mathbb{R}^d}
    u(x,t)\,\big\| s(x,t) - s_{\mathrm{NN}}(x,t;\theta)\big\|^2\,dx\,dt,
\end{equation}
where $s_{\mathrm{NN}}(\,\cdot\,;\theta)$ denotes a neural network parameterized by $\theta$. A motivation for this score-matching problem is provided in Remark~\ref{rem:score-matching}, which arises from the KL divergence stability analysis in Lemma~\ref{lem:KL}.

However, the optimization problem \eqref{intro_pb:score-matching} is not directly tractable in practice, as the true data distribution $u_0$, and hence the associated solution $u$ and score function $s$, are unknown.

Using the following identity:
\[
 u(x,t)s(x,t) =\nabla u(x,t) = \nabla G_{t} * u_0 (x) = \int_{\R^d} \frac{y-x}{2t} G_t(x-y) du_0(y),
\]
where $G_t$ is the Gaussian heat kernel,
\eqref{intro_pb:score-matching} turns out to be equivalent to 
\begin{equation}\label{intro_pb:score-matching-equi}
     \inf_{\theta \in \mathbb{R}^P} 
     \int_{\mathbb{R}^d}\!\! \int_{0}^T \!\! \int_{\mathbb{R}^d}
     \left\| s_{\mathrm{NN}}(x,t;\theta) - \frac{y - x}{2t} \right\|^2 
     G_t(x - y)\,dx\,dt\,du_0(y).
\end{equation}

In practice, we only have access to a finite collection of samples  $\{y_k\}_{k=1}^N$ drawn from the initial distribution $u_0$.

By replacing the integral with respect to $u_0$ by the empirical average over the observed samples, we recover the classical empirical score-matching objective~\cite{vincent2011connection,song2021scorebased}:
\begin{equation}\label{pb:score-matching}
     \inf_{\theta \in \mathbb{R}^P} 
     \frac{1}{N}\sum_{k=1}^N \int_{0}^T \!\! \int_{\mathbb{R}^d}
     \left\| s_{\mathrm{NN}}(x,t;\theta) - \frac{y_k - x}{2t} \right\|^2 
     G_t(x - y_k)\,dx\,dt.
\end{equation}

Moreover, by performing the change of variables $z = (x - y)/(2t)$, one recovers the DDPM \cite{ho2020denoising} type loss:
\begin{equation}\label{pb:score-matching-DDPM}
    \inf_{\theta\in \mathbb{R}^P} \frac{1}{N} \sum_{k=1}^N 
    \int_{0}^T \int_{\mathbb{R}^d} 
    \left\| s_\mathrm{NN}(2tz+y_k,t;\theta) + z \right\|^2 
    G_{(4t)^{-1}}(z)\,dz\,dt.
\end{equation} 
Here, each clean data sample \(y_k\) is perturbed by Gaussian noise \(z\) to produce a noisy sample \(x = 2tz + y_k\). 
The neural network \(s_{\mathrm{NN}}(x,t;\theta)\) is trained to predict the negative of this added noise, which serves as the continuous-time analogue of the discrete DDPM training objective introduced by~\cite{ho2020denoising}.

Another classical reformulation of the score-matching objective is the \emph{Hyv\"arinen} loss introduced in~\cite{hyvarinen2005estimation}. 
It can be derived by applying integration by parts to the original score-matching problem~\eqref{intro_pb:score-matching}. 
In the finite-sample setting, the Hyv\"arinen loss function takes the form
\begin{equation}\label{pb:score-matching-h}
    \inf_{\theta \in \mathbb{R}^P} \frac{1}{N}\sum_{k=1}^N
    \int_{0}^{T }  \int_{\mathbb{R}^d} 
        \Big( \|s_{\mathrm{NN}}(x,t;\theta) \|^2 
        + 2\,\mathrm{div}_x\, s_{\mathrm{NN}}(x,t;\theta) \Big)\,
        G_t(x-y_k)\,dx\,dt.
\end{equation}

If the expressivity of neural networks were unbounded, i.e., if there were no constraints on $s_{\mathrm{NN}}$, then the empirical score function~\eqref{eq:score_empirical} would naturally minimize~\eqref{pb:score-matching} by satisfying its first-order optimality condition, see also~\cite{gu2023on,yi2023generalization}.
However, as shown by our main results, such a score function drives the generative flow to concentrate precisely on the training samples $\{y_k\}_{k=1}^N$, leading to pure imitation rather than genuine sample generation.

This observation highlights the fundamental role played by neural network parameterization. 
By constraining the class of admissible score functions, the network architecture introduces an implicit regularization effect that smooths the learned score and prevents it from collapsing onto the empirical distribution. 
As a result, the trained score exhibits improved generalization and supports more meaningful generative behavior, even though the training objective itself does not explicitly include any regularization term. This observation is consistent with the explanation proposed in~\cite{yi2023generalization}, which attributes such improved generalization to the optimization bias inherent in neural network training.
To see this more clearly, consider training the score function using a multilayer perceptron with a ReLU activation. 
Once the parameters are fixed, the resulting neural network is globally Lipschitz continuous with respect to its inputs \((x,t)\). 
Moreover, penalizing the network parameters implicitly regularizes this Lipschitz constant, 
thereby preventing the Li--Yau type blow-up as \(t \to 0\).

\subsection{Other regularization techniques}

As shown by our main results, the key mechanism driving the imitation behavior of the generative flow is the \(-1/t\) blow-up in the divergence of the true score function as \(t \to 0\).  
Despite the implicit regularization provided by neural networks, additional strategies to improve the generative capacity of diffusion models, by mitigating this singularity, are presented below:

\begin{enumerate}
    \item \textit{Early-stopping strategy.}  
    When using the empirical score function to generate new data, one can halt the reverse process slightly before \(t = 0\), at some positive time \(t> 0\).  
    In this case, the divergence of the empirical score is uniformly bounded from below by \(-1 / t\), thereby preventing the generated flow from collapsing strictly onto the training samples.  
    This approach has been mentioned in~\cite{song2021scorebased,benton2024nearly,li2024good}.  
    Moreover, Theorem~\ref{thm:convergence_rate} provides an upper bound \(\mathcal{O}(\sqrt{t})\) on the distance between generated data and the nearest training sample, offering insight into the trade-off between robustness and generative diversity.

\medskip
    \item \textit{Regularization in the loss function.}  
    Another approach is to add a penalty term based on the divergence of the learned score function to the score-matching loss~\eqref{pb:score-matching}, yielding:
    \begin{equation}\label{pb:score-matching-penalty}
     \inf_{\theta \in \mathbb{R}^P} 
     \frac{1}{N}\sum_{k=1}^N \int_{0}^T \!\! \int_{\mathbb{R}^d}
     \left(
     \left\| s_{\mathrm{NN}}(x,t;\theta) - \frac{y_k - x}{2t} \right\|^2 
     + \lambda \, \big( \mathrm{div}_x\, s_{\mathrm{NN}}(x,t;\theta) \big)^2 
     \right) 
     G_t(x - y_k)\,dx\,dt,
    \end{equation}
    where \(\lambda \geq 0\) is a hyperparameter controlling the strength of the regularization.  
    This penalty was first proposed in the static setting by Kingma and LeCun~\cite{kingma2010regularized} for generative models, and is closely related to \emph{curvature-driven} smoothing technique in \cite{bishop1995neural}.
  
\end{enumerate}

\section{Proofs of main results}\label{sec:proof}
Throughout this section, we denote by \(u\) the solution of the heat equation~\eqref{eq:heat}, and by \(v\) the solution of the backward FP equation~\eqref{eq:FP}.

\subsection{Proof of Theorem \ref{thm:energy_score}}
The proof consists of the following three steps.

\medskip
\noindent\textbf{Step 1 (Regularity of the score function).}  
Recall that \(Q = \mathbb{R}^d \times (0, T]\) and that
\begin{equation*}
  s(x,t) = \nabla \log u(x,t) = \frac{\nabla u(x,t)}{u(x,t)}, 
  \qquad \forall\, (x,t) \in Q.
\end{equation*}
By the convolution representation of \(u\), the score function \(s\) is well-defined and belongs to \(\mathcal{C}^{\infty}(Q)\).  
Therefore, the classical solution of the backward FP equation~\eqref{eq:FP}, denoted by $v$, exists and is unique in \(Q\). By the monotonicity property of the FP equation (and of the continuity equation), we have
\[
    v(x,t) \geq 0, \quad \forall\, (x,t) \in Q.
\]

Moreover, the Li--Yau inequality for the heat equation~\cite[Thm.~1.3]{li1986parabolic} implies that
\begin{equation}\label{eq:Li-Yau}
    \operatorname{div}(s(x,t)) \geq -\frac{d}{2t}, 
    \qquad \forall\, (x,t) \in Q.
\end{equation}

\medskip
\noindent\textbf{Step 2 (Energy identity).}
Fix any $p\in [1,\infty)$. Assume that 
$
v_T\in L^p(\R^d).
$
Multiply equation \eqref{eq:FP} by \( v^{p-1} \) and integrate over \( \mathbb{R}^d \):
\[
\int_{\mathbb{R}^d} \left(
v^{p-1} \partial_t v
+ \epsilon\, v^{p-1} \Delta v
- (1+\epsilon)\, v^{p-1} \, \text{div}(s\,v)
\right) dx = 0.
\]
Each term is handled as follows:

\begin{itemize}
    \item Time derivative:
\[
\int_{\mathbb{R}^d} v^{p-1} \partial_t v \, dx
= \frac{1}{p} \frac{d}{dt} \| v \|_{L^p}^p.
\]
\item Diffusion term:
\[
\int_{\mathbb{R}^d} v^{p-1} \Delta v\, dx
= - (p-1) \int_{\mathbb{R}^d} v^{p-2} \| \nabla v \|^2 dx.
\]
\item Advection term:
\begin{align*}
\int_{\mathbb{R}^d} v^{p-1} \, \operatorname{div}(s\,v) \, dx
&= - \int_{\mathbb{R}^d} \big\langle \nabla(v^{p-1}),\, v\,s \big\rangle \, dx \\
&= - (p-1) \int_{\mathbb{R}^d} v^{p-2} \big\langle \nabla v,\, v\,s \big\rangle \, dx \\
&= - (p-1) \int_{\mathbb{R}^d} v^{p-1} \big\langle \nabla v,\, s \big\rangle \, dx \\
&= - \frac{p-1}{p} \int_{\mathbb{R}^d} \big\langle \nabla(v^p),\, s \big\rangle \, dx
= \frac{p-1}{p} \int_{\mathbb{R}^d} v^p \, \operatorname{div}(s) \, dx.
\end{align*}

\end{itemize}
Putting all terms together yields the energy identity:
\begin{equation}\label{eq:energy_identity}
\frac{1}{p} \frac{d}{dt} \| v \|_{L^p}^p
- \epsilon (p-1) \int_{\mathbb{R}^d} v^{p-2} \| \nabla v \|^2 dx
- (1+\epsilon) \frac{p-1}{p} \int_{\mathbb{R}^d} v^p \, \text{div}(s) \, dx = 0.
\end{equation}

\medskip
\noindent\textbf{Step 3 (Energy estimate).} 
Now, using the lower bound on \(\operatorname{div}(s)\) from~\eqref{eq:Li-Yau}, 
we deduce from~\eqref{eq:energy_identity} the following differential inequality:
\begin{equation*}
    \begin{cases}
        \displaystyle \frac{d}{dt} \| v(t) \|_{L^p}^p 
        \;\geq\; -\,\frac{d(1+\epsilon)(p-1)}{2t}\, \| v(t) \|_{L^p}^p, 
        & \forall\, t \in (0, T), \\[0.8em]
        \|v(T)\|_{L^p}^p = \|v_T\|_{L^p}^p.
    \end{cases}
\end{equation*}
Dividing both sides by \(\|v(t)\|_{L^p}^p\) yields
\[
    \frac{d}{dt} \log \bigl( \|v(t)\|_{L^p}^p \bigr) 
    \;\geq\; -\,\frac{C}{2t},
    \qquad C := d(1+\epsilon)(p-1).
\]
Consequently,
\[
    \frac{d}{dt} \log \bigl( \|v(t)\|_{L^p} \bigr) 
    \;\geq\; -\,\frac{C}{2 p t}.
\]
Integrating this differential inequality from \(t\) to \(T\) gives
\[
    \log \bigl( \|v_T\|_{L^p} \bigr) 
    - \log \bigl( \|v(t)\|_{L^p} \bigr) 
    \;\geq\; -\,\frac{C}{2p} \log \!\left( \frac{T}{t} \right).
\]
Exponentiating both sides, we obtain
\[
    \|v(t)\|_{L^p} 
    \;\leq\; 
    \left( \frac{T}{t} \right)^{\!\frac{C}{2p}} \, \|v_T\|_{L^p}.
\]
This establishes the desired estimate~\eqref{eq:energy_score}.

\subsection{Proof of Theorem \ref{thm:support}}

To prove Theorem~\ref{thm:support}, we first establish three auxiliary lemmas.  
The first one provides \emph{a priori} bounds for the solution \(u\) of the heat equation under the assumption of a compactly supported initial distribution.  
The second shows that the terminal KL divergences are finite under our assumptions.  
The third concerns the contraction of the KL divergence in the backward time direction, known in information theory as the \emph{data-processing inequality} \cite{klartag2025strong}.

\begin{lem}[A priori estimates for the heat equation]\label{lem:gaussian-bounds}
Let \(u_0\) be an initial density with compact support contained in the ball \(B_R(0)\) for some \(R>0\), i.e.,
\(\mathrm{supp}(u_0) \subset B_R(0)\).
Then, the corresponding solution \(u(x,t)=(G_t*u_0)(x)\) satisfies the following estimates for all \((x,t)\in Q\):
\begin{enumerate}
    \item
    \textbf{Gaussian bounds:}
    \begin{equation}\label{eq:gaussian-bounds}
       (4\pi t)^{-d/2} 
       \exp\!\left(-\frac{(\|x\| + R)^2}{4t}\right) 
       \;\leq\; u(x,t) 
       \;\leq\; (4\pi t)^{-d/2} 
       \exp\!\left(-\frac{(\|x\| - R)^2}{4t}\right).
    \end{equation}

    \item \textbf{Hessian bounds:}
    \begin{equation}\label{eq:hessian-bound}
       -\frac{1}{2t}\, I_d 
       \;\preceq\; \mathrm{Hess}\bigl(\log u(x,t)\bigr) 
       \;\preceq\; \left(-\frac{1}{2t} + \frac{R^2}{4t^2}\right) I_d,
    \end{equation}
    where \(I_d\) denotes the \(d\times d\) identity matrix.

    \item \textbf{Exponential moment bound:}
    There exists a constant \(C_{d,R,T}>0\), depending only on \(d\), \(R\), and \(T\), such that
    \begin{equation}\label{eq:moment-bound}
        \sup_{0<t\le T}\ \int_{\R^d} e^{\|x\|}\,u(x,t)\,dx 
        \;\leq\; C_{d,R,T}.
    \end{equation}
\end{enumerate}
\end{lem}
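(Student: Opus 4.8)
The plan is to work entirely from the convolution representation $u(x,t) = (G_t * u_0)(x) = \int_{B_R(0)} G_t(x-y)\, u_0(y)\, dy$, treating $t>0$ as a fixed parameter, $u_0$ as a probability density supported in $B_R(0)$, and $G_t$ as the Gaussian kernel. Each of the three estimates then reduces to an elementary computation in the $x$-variable.

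For the Gaussian bounds (i) I would bound the kernel pointwise in $y$: for every $y \in B_R(0)$ the triangle inequality gives $\|x\| - R \le \|x - y\| \le \|x\| + R$. Taking the infimum and supremum of $G_t(x-y)$ over $y \in B_R(0)$ and integrating against $u_0$ (which has unit mass) yields the lower bound directly, and the upper bound $u(x,t) \le (4\pi t)^{-d/2} \exp\!\bigl(-(\|x\|-R)^2/4t\bigr)$ whenever $\|x\| \ge R$ (and, trivially, $u(x,t) \le (4\pi t)^{-d/2}$ when $\|x\| < R$). This step is routine.

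The Hessian bounds (ii) are the only place that requires an idea. Factoring $G_t(x-y) = (4\pi t)^{-d/2} e^{-\|x\|^2/4t}\, e^{-\|y\|^2/4t}\, e^{\langle x,y\rangle/2t}$, I would write
\[
\log u(x,t) = c(t) - \frac{\|x\|^2}{4t} + \phi(x,t), \qquad \phi(x,t) := \log \int_{B_R(0)} e^{\langle x, y\rangle / 2t}\, d\mu_t(y),
\]
where $d\mu_t(y) := e^{-\|y\|^2/4t} u_0(y)\, dy$ is a finite positive measure on $B_R(0)$ and $c(t)$ depends on $t$ only. The map $x \mapsto \phi(x,t)$ is (a rescaling of) a cumulant generating function, hence convex, and a direct differentiation gives $\mathrm{Hess}_x \phi(x,t) = \tfrac{1}{4t^2}\, \mathrm{Cov}_{\nu_x}(Y) \succeq 0$, where $\nu_x(dy) \propto e^{\langle x, y\rangle/2t}\, d\mu_t(y)$ is a probability measure still supported in $B_R(0)$. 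Since $\langle Y, e\rangle \in [-R, R]$ for every unit vector $e$ when $Y \sim \nu_x$, Popoviciu's variance inequality gives $e^{\top}\mathrm{Cov}_{\nu_x}(Y)\, e \le R^2$, i.e. $\mathrm{Cov}_{\nu_x}(Y) \preceq R^2 I_d$. Adding $\mathrm{Hess}_x(-\|x\|^2/4t) = -\tfrac{1}{2t} I_d$ produces exactly the claimed two-sided bound. (Note that the lower bound $\mathrm{Hess}(\log u) \succeq -\tfrac{1}{2t} I_d$ needs no compactness at all and becomes the Euclidean Li--Yau inequality $\Delta \log u \ge -d/2t$ upon taking traces.)

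For the exponential moment (iii) I would apply Tonelli and the shift $x = y + z$:
\[
\int_{\R^d} e^{\|x\|} u(x,t)\, dx = \int_{B_R(0)} u_0(y) \int_{\R^d} e^{\|y+z\|} G_t(z)\, dz\, dy \;\le\; e^R \int_{\R^d} e^{\|z\|} G_t(z)\, dz,
\]
using $\|y + z\| \le R + \|z\|$ on $\mathrm{supp}(u_0)$ and $\int u_0 = 1$. The last integral is $\mathbb{E}\bigl[e^{\|Z\|}\bigr]$ with $Z \sim \mathcal{N}(0, 2t I_d)$; completing the square in the radial variable (or a one-line sub-Gaussian estimate) shows it is finite, continuous in $t$, and tends to $1$ as $t \to 0^+$, hence bounded by a constant $M_{d,T}$ on $(0,T]$. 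Setting $C_{d,R,T} := e^R M_{d,T}$ closes the argument. Overall the main (mild) obstacle is recognizing the moment-generating-function structure behind (ii) and bounding the tilted covariance by $R^2 I_d$; the remaining computations are mechanical.
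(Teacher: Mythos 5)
Your argument is correct and follows essentially the same route as the paper: (i) triangle inequality applied to the kernel, (ii) writing $\mathrm{Hess}(\log u) = -\tfrac{1}{2t}I_d + \tfrac{1}{4t^2}\mathrm{Cov}(Y_x)$ for the tilted posterior measure on $B_R(0)$ and bounding the covariance by $R^2 I_d$, and (iii) Fubini/Tonelli plus a Gaussian exponential-moment estimate. Your cumulant-generating-function framing of (ii) and Popoviciu's inequality are cosmetic variants of the paper's direct differentiation and the crude bound $\mathrm{Var}(a^{\top}Y_x)\le\mathbb{E}[(a^{\top}Y_x)^2]\le R^2$, and your completing-the-square argument in (iii) replaces the paper's appeal to Fernique's theorem with something more elementary (and arguably preferable, since Fernique is overkill for a finite-dimensional Gaussian).

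One point worth keeping: your parenthetical caveat in (i), that the upper bound is valid only for $\|x\|\ge R$ and that otherwise one only has $u(x,t)\le(4\pi t)^{-d/2}$, is not a stylistic nicety but an actual correction. The paper's proof squares the (possibly negative) inequality $\|x\| - R \le \|x-y\|$ without comment, and the right-hand side of \eqref{eq:gaussian-bounds} is genuinely false for $\|x\|<R$: take $u_0$ concentrated in a small neighbourhood of the origin and evaluate at $x=0$, which gives $u(0,t)\approx(4\pi t)^{-d/2}$ whereas the stated upper bound is $(4\pi t)^{-d/2}e^{-R^2/4t}$. This is harmless for the paper's downstream use, since Lemma~\ref{lm:absolutely-continuous} only needs $|\log u(x,T)|\le C(1+\|x\|^2)$, which the corrected upper bound (and the trivial bound on $\{\|x\|<R\}$) still supply, but your version is the correct one.
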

\begin{proof}
    The proof is presented in Section \ref{sec:proof_lemmas}.
\end{proof}

\begin{lem}[Terminal bound]\label{lm:absolutely-continuous}
Under Assumption~\ref{ass1}, the following holds:
\[
    \mathrm{KL}\bigl(v_T \,\|\, u(T)\bigr) < \infty.
\]
Moreover, if we further assume that \(|\log v_T(x)|\) is bounded by a polynomial function of \(\|x\|\), then
\[
    \mathrm{KL}\bigl(u(T) \,\|\, v_T\bigr) < \infty.
\]
\end{lem}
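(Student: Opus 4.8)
The plan is to prove the two finiteness statements separately, using the Gaussian bounds \eqref{eq:gaussian-bounds} from Lemma~\ref{lem:gaussian-bounds} to replace the unknown density $u(T)$ by an explicit Gaussian-type weight. Write $R$ for a radius with $\operatorname{supp}(u_0)\subset B_R(0)$, and fix $t=T$; by \eqref{eq:gaussian-bounds} there are constants $c_1,c_2>0$ (depending on $d,R,T$) such that $c_1 e^{-(\|x\|+R)^2/(4T)}\le u(x,T)\le c_2 e^{-(\|x\|-R)^2/(4T)}$ for all $x$. Consequently $-\log u(x,T)$ is bounded above and below by quadratic polynomials in $\|x\|$: explicitly $-\log u(x,T)\le -\log c_1 + (\|x\|+R)^2/(4T)$ and $-\log u(x,T)\ge -\log c_2 + (\|x\|-R)^2/(4T)$. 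This is the one nontrivial input; the rest is integrability bookkeeping against the hypotheses on $v_T$ in Assumption~\ref{ass1}(2).

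For the first claim, $\mathrm{KL}(v_T\,\|\,u(T))=\int_{\R^d} v_T(x)\log v_T(x)\,dx - \int_{\R^d} v_T(x)\log u(x,T)\,dx$ (the decomposition is legitimate once each piece is shown to be absolutely integrable, which is what we check). The first integral is finite by the hypothesis $v_T\log v_T\in L^1(\R^d)$; note $v_T\log v_T\in L^1$ together with $v_T\in\mathcal P(\R^d)$ forces $v_T|\log v_T|\in L^1$ by splitting into the regions $\{v_T\ge 1\}$ and $\{v_T<1\}$ and using $0\le -v_T\log v_T\le \sqrt{v_T}$ near $0$. For the second integral, $-\int v_T\log u(\cdot,T)\le -\log c_1\int v_T + (4T)^{-1}\int (\|x\|+R)^2 v_T\,dx$, which is finite because $v_T$ is a probability density and $\|x\|^2 v_T\in L^1(\R^d)$ by hypothesis (and $\|x\|v_T\in L^1$ follows from $\|x\|^2v_T\in L^1$ and $v_T\in L^1$ via Cauchy–Schwarz, or crudely $\|x\|\le 1+\|x\|^2$). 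A matching lower bound using the upper Gaussian bound on $u$ shows $-\int v_T\log u(\cdot,T)$ is also bounded below, so it is a well-defined finite number; hence $\mathrm{KL}(v_T\,\|\,u(T))<\infty$. (Absolute continuity $v_T\ll u(T)$ is automatic since both are mutually absolutely continuous with respect to $\mathcal L^d$ — $u(T)>0$ everywhere and $v_T>0$ a.e.)

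For the second claim, $\mathrm{KL}(u(T)\,\|\,v_T)=\int_{\R^d} u(x,T)\log u(x,T)\,dx - \int_{\R^d} u(x,T)\log v_T(x)\,dx$. The first term is finite: $u(\cdot,T)\in L^\infty(\R^d)$ with Gaussian decay, so $u\log u$ is bounded and integrable — concretely $|u\log u|(x,T)\le C(1+\|x\|^2)e^{-(\|x\|-R)^2/(4T)}$ using the explicit bounds above, which is integrable. For the second term, the new hypothesis that $|\log v_T(x)|\le P(\|x\|)$ for some polynomial $P$ gives $\big|\int u(\cdot,T)\log v_T\big|\le \int P(\|x\|)\,u(x,T)\,dx<\infty$, since $u(\cdot,T)$ has all polynomial moments finite (indeed the exponential moment bound \eqref{eq:moment-bound} dominates any polynomial moment, or one argues directly from the Gaussian upper bound). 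Thus both integrals are finite and $\mathrm{KL}(u(T)\,\|\,v_T)<\infty$.

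The only real obstacle is the standard subtlety that $\mathrm{KL}(m_1\,\|\,m_2)=\int \log(m_1/m_2)\,dm_1$ is a priori only well-defined as an element of $(-\infty,+\infty]$, and splitting it into $\int m_1\log m_1 - \int m_1\log m_2$ requires knowing both pieces are separately finite (or at least not of the form $\infty-\infty$); I handle this by establishing two-sided polynomial-in-$\|x\|^2$ bounds on $|\log u(x,T)|$ from \eqref{eq:gaussian-bounds}, which makes $\int v_T|\log u(\cdot,T)|$ and $\int u(\cdot,T)|\log v_T|$ genuinely finite, so every term in each decomposition is absolutely convergent and the rearrangement is justified. Everything else is elementary: convexity of $x\mapsto x\log x$ near $0$ to upgrade $v_T\log v_T\in L^1$ to $v_T|\log v_T|\in L^1$, and polynomial-moment finiteness of the heat flow from a compactly supported datum, both of which follow from Lemma~\ref{lem:gaussian-bounds}.
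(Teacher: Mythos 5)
Your proof is correct and takes essentially the same route as the paper's: split $\mathrm{KL}$ into Shannon entropy minus cross-entropy, bound $|\log u(\cdot,T)|$ by a quadratic in $\|x\|$ via the Gaussian bounds of Lemma~\ref{lem:gaussian-bounds}, and close using the second-moment (resp.\ polynomial-log and polynomial-moment) hypotheses. The only quibble is the aside that $-v_T\log v_T\le\sqrt{v_T}$ on $\{v_T<1\}$ upgrades $v_T\log v_T\in L^1$ to $v_T|\log v_T|\in L^1$: $\sqrt{v_T}$ need not be integrable (consider $v_T(x)\sim\|x\|^{-d}(\log\|x\|)^{-2}$ at infinity), but this remark is not load-bearing, since the hypothesis $v_T\log v_T\in L^1(\R^d)$ already means $\int_{\R^d}|v_T\log v_T|\,dx<\infty$ under the standard reading of $L^1$.
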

\begin{proof}
    The proof is presented in Section \ref{sec:proof_lemmas}.
\end{proof}

\begin{lem}[Data-processing and strong data-processing inequalities]\label{lem:KL} 
Under Assumption~\ref{ass1}, for every \(t \in (0, T)\), we have
\[
    \mathrm{KL}\bigl(v(t) \,\|\, u(t)\bigr) 
    \;\leq\;  \mathrm{KL}\bigl(v_T \,\|\, u(T)\bigr)  
    \;<\; \infty.
\]
Moreover, the relative entropy satisfies the differential identity
\begin{equation}\label{KL:identity-1}
    \frac{d}{dt}\,\mathrm{KL}\!\left(v(t)\,\|\,u(t)\right)
    = 
    \epsilon \int_{\mathbb{R}^d}
    \Bigl\|\nabla \log \frac{v(x,t)}{u(x,t)}\Bigr\|^2 
    v(x,t)\,dx.
\end{equation}
If, in addition, \(|\log v_T(x)|\) is bounded by a polynomial function of \(\|x\|\), then for every \(t \in (0, T)\),
\[
    \mathrm{KL}\bigl(u(t) \,\|\, v(t)\bigr)
    \;\leq\; \mathrm{KL}\bigl(u(T) \,\|\, v_T\bigr)
    \;<\; \infty.
\]
Furthermore, if \(v_T\) has a finite fourth moment, then the following differential identity holds:
\begin{equation}\label{KL:identity-2}
    \frac{d}{dt}\,\mathrm{KL}\!\left(u(t)\,\|\,v(t)\right)
    = 
    \epsilon \int_{\mathbb{R}^d}
    \Bigl\|\nabla \log \frac{u(x,t)}{v(x,t)}\Bigr\|^2 
    u(x,t)\,dx.
\end{equation}
The right-hand sides of \eqref{KL:identity-1} and \eqref{KL:identity-2} are referred to as the relative Fisher information between \(v(t)\) and \(u(t)\).
\end{lem}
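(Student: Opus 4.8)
The plan is to prove Lemma~\ref{lem:KL} by exploiting the fact that both $v$ and $u$ solve, in the backward sense, Fokker--Planck equations sharing the same drift $-(1+\epsilon)s$ and diffusion coefficient $\epsilon$; indeed $u$ trivially satisfies $\partial_t u + \epsilon \Delta u - (1+\epsilon)\operatorname{div}(su) = \partial_t u - \Delta u + (1+\epsilon)\bigl(\epsilon/(1+\epsilon)\Delta u - \operatorname{div}(su)\bigr)$, and one checks that $\epsilon \Delta u - (1+\epsilon)\operatorname{div}(su) = \epsilon\Delta u - (1+\epsilon)\Delta u = -\Delta u$ since $\operatorname{div}(su) = \operatorname{div}(\nabla u) = \Delta u$, so the equation reduces to the heat equation $\partial_t u - \Delta u = 0$, which $u$ solves by construction. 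Thus both measures evolve under the same backward-in-time FP operator, and the relative entropy between two solutions of a common Fokker--Planck flow is monotone --- the probabilistic statement of the data-processing inequality. To make this rigorous I would first restrict to $t\in[\tau,T]$ for $\tau>0$, where the score $s$ is smooth with all derivatives bounded (by the a priori estimates of Lemma~\ref{lem:gaussian-bounds}, in particular the Hessian bound \eqref{eq:hessian-bound} and the Gaussian bounds \eqref{eq:gaussian-bounds} controlling $\|s\|$ and $\operatorname{div}s$), so that classical parabolic theory gives smooth, strictly positive $v$ and enough decay to justify the integrations by parts below.

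The core computation is to differentiate $t\mapsto \mathrm{KL}(v(t)\|u(t)) = \int v\log(v/u)\,dx$ in time. Writing $r = v/u$ and using $\partial_t v = -\epsilon\Delta v + (1+\epsilon)\operatorname{div}(sv)$ together with $\partial_t u = -\epsilon\Delta u + (1+\epsilon)\operatorname{div}(su)$, a standard manipulation --- substitute the equations, integrate by parts, and collect terms --- yields
\begin{equation*}
\frac{d}{dt}\,\mathrm{KL}(v(t)\|u(t)) = \epsilon\int_{\mathbb{R}^d}\Bigl\|\nabla\log\frac{v}{u}\Bigr\|^2 v\,dx \;\ge\; 0,
\end{equation*}
which is exactly \eqref{KL:identity-1}; notice the transport part $(1+\epsilon)\operatorname{div}(s\cdot)$ contributes nothing to $\tfrac{d}{dt}\mathrm{KL}$ because a pure drift common to both densities preserves relative entropy, while the dissipative part produces the nonnegative relative Fisher information. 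Since the derivative is nonnegative and $t$ runs backward from $T$ to $\tau$, integrating gives $\mathrm{KL}(v(\tau)\|u(\tau)) \le \mathrm{KL}(v(T)\|u(T)) = \mathrm{KL}(v_T\|u(T))$, which is finite by Lemma~\ref{lm:absolutely-continuous}. Letting $\tau\to 0^+$ is not needed --- the claim is for $t\in(0,T)$ --- but I would note lower semicontinuity of KL under weak-* convergence if one wants the bound at limit points. The identity \eqref{KL:identity-2} and the bound $\mathrm{KL}(u(t)\|v(t))\le\mathrm{KL}(u(T)\|v_T)$ follow by the symmetric argument with the roles of $u$ and $v$ swapped, now invoking the second half of Lemma~\ref{lm:absolutely-continuous} (which needs the polynomial bound on $|\log v_T|$) to start the flow with finite entropy, and the finite fourth moment of $v_T$ to guarantee the integrability needed to justify differentiating under the integral and discarding boundary terms at infinity for the $\mathrm{KL}(u\|v)$ functional (where the reference measure $v$ may be only mildly decaying).

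The main obstacle is rigor in the integration-by-parts and differentiation-under-the-integral steps near the endpoints and at spatial infinity: the score $s$ blows up like $1/t$ as $t\to 0$, and the densities $v, u$ decay like Gaussians but with $t$-dependent widths, so one must verify that the boundary terms $\oint (\cdots)$ at $|x|\to\infty$ genuinely vanish and that $\int v\log(v/u)$ is differentiable with the claimed derivative. I would handle this by (i) working on $[\tau,T]$ where everything is smooth and the Gaussian tails from Lemma~\ref{lem:gaussian-bounds} dominate any polynomial weight, using a cutoff function $\chi_n(x) = \chi(x/n)$ and dominated convergence to pass $n\to\infty$; (ii) controlling $\log u$ from below via the lower Gaussian bound in \eqref{eq:gaussian-bounds}, so $|\log u(x,t)| \lesssim \|x\|^2/t + |\log t|$, which is integrable against the Gaussian-tailed $v$ and $u$; and (iii) for the $\mathrm{KL}(u\|v)$ identity, using the exponential moment bound \eqref{eq:moment-bound} on $u$ and the fourth-moment hypothesis on $v_T$ (propagated to $v(t)$ via the $L^p$-type control or a direct moment ODE) to close the integrability gap. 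The data-processing inequality itself is classical (cf.~\cite{klartag2025strong, villani2009hypocoercivity}), so the novelty here is purely the verification that our specific non-autonomous, singular-drift setting meets the hypotheses, and that is where the a priori estimates of Lemma~\ref{lem:gaussian-bounds} do the real work.
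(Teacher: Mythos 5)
Your computational core matches the paper's: both observe that $u$ itself satisfies the backward FP equation $\partial_t u + \epsilon\Delta u - (1+\epsilon)\operatorname{div}(su)=0$ (since $su=\nabla u$), differentiate $w(t)=\int v\log(v/u)\,dx$, substitute the two FP equations, and check that the transport parts cancel while the diffusion part produces the relative Fisher information; the integrability/interchange step (which you handle by cutoff and dominated convergence) is handled in the paper by citing Klartag--Ordentlich, which is where the fourth-moment hypothesis enters.

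However, there is a real gap in your logical ordering. You propose to derive the monotonicity inequalities as corollaries of the differential identities, integrating the sign $\frac{d}{dt}\mathrm{KL}\ge 0$. For the first pair this works: identity~\eqref{KL:identity-1} requires only the fourth moment of the reference measure $u(T)$, which is automatic since $u(T)$ is the heat flow of compactly supported data, so under Assumption~\ref{ass1} alone you get the identity and hence the inequality $\mathrm{KL}(v(t)\|u(t))\le\mathrm{KL}(v_T\|u(T))$. But for the second pair, the lemma asks for the inequality $\mathrm{KL}(u(t)\|v(t))\le\mathrm{KL}(u(T)\|v_T)$ under the hypothesis that $|\log v_T|$ grows polynomially, whereas identity~\eqref{KL:identity-2} (from which you want to deduce it) is only stated --- and only provable by your method --- under the \emph{stronger} hypothesis that $v_T$ has finite fourth moment. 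The polynomial-log condition does not imply finite fourth moment: take $v_T(x)\propto(1+\|x\|)^{-(d+3)}$, which satisfies Assumption~\ref{ass1}(2) and has $|\log v_T|\lesssim\log(1+\|x\|)$ yet has infinite fourth moment. The paper closes this gap by proving the monotonicity inequality first and independently, via the data-processing inequality for Markov semigroups from Bogachev et al., which needs only that the drift $s(\cdot,t)$ be Lipschitz on each compact time interval $[\tau,T]$ --- a regularity condition furnished by the Hessian bound~\eqref{eq:hessian-bound}, not a moment condition on $v_T$. You should either adopt that route for the second inequality, or explain why the monotonicity can be extracted from a truncated/regularized version of the identity without the fourth-moment hypothesis.
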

\begin{proof}
    The proof is presented in Section \ref{sec:proof_lemmas}.
\end{proof}

\begin{rem}\label{rem:score-matching}
   The dynamical identities \eqref{KL:identity-1} and \eqref{KL:identity-2} can be extended to the case where the transport terms are not identical.  
Let \(v_{\theta}\) denote the solution of the backward FP equation \eqref{eq:FP} with transport term \(\mathrm{div}(s_{\theta} v)\), which differs from \(\mathrm{div}(s v)\).  
Then, by an argument analogous to that of Lemma~\ref{lem:KL}, we obtain
\begin{equation*}
    \frac{d}{dt}\,\mathrm{KL}\!\left(u(t)\,\|\,v_{\theta}(t)\right)
    \;=\; 
    \epsilon \int_{\mathbb{R}^d}
    \Big\|\nabla \log \frac{u}{v_\theta}\Big\|^2 
    u\,dx
    \;-\;(1+\epsilon)\int_{\mathbb{R}^d}  u\,\left\langle  s - s_{\theta},\ \nabla \log \frac{ u}{v_\theta}\right\rangle\,dx .
\end{equation*}
Applying Young's inequality to the second term, for any \(\eta \in (0,\epsilon]\), yields
\[
\frac{d}{dt}\,\mathrm{KL}\!\left(u(t)\,\|\,v_{\theta}(t)\right) 
\;\geq\; (\epsilon - \eta) \int_{\mathbb{R}^d}
    \Big\|\nabla \log \frac{u}{v_\theta}\Big\|^2 
    u\,dx 
    \;-\; \frac{(1+\epsilon)^2}{4\eta} \underbrace{\int_{\R^d} \|s - s_{\theta}\|^2\, u\, dx}_{\text{Score-matching term}}.
\]
This differential inequality highlights the natural appearance of the score-matching loss function discussed in \eqref{intro_pb:score-matching}.  
Moreover, integrating the inequality over \((t,T)\) yields a quantitative stability estimate that connects the evolution of the KL divergence with the score discrepancy. This allows proving results similar to those in \cite{chen2023sampling,benton2024nearly}, where the authors used the \textit{Girsanov} Theorem to bound the error. 
\end{rem}

\medskip
We are now in a position to present the proof of Theorem~\ref{thm:support}.

\begin{proof}[Proof of Theorem~\ref{thm:support}]
The argument proceeds in several steps.

\medskip
\noindent\textbf{Step 1 (Pre-compactness).}  
By Lemma \ref{lem:KL}, for any $t\in (0,T)$, we have
\begin{equation}\label{eq:KL-uniform}
    \mathrm{KL}(v(t) \| u(t)) \leq \mathrm{KL}(v_T \| u(T)) < \infty.
\end{equation}

Let \(\{t_n\}_{n\ge 1}\) be any sequence with \(t_n \to 0^+\).  
We claim that \(\{v(t_n)\}_{n\ge 1}\) is pre-compact in the weak-* topology on \(\mathcal{P}(\mathbb{R}^d)\).  
By the \textit{Prokhorov} theorem, it suffices to show that the family \(\{v(t_n)\}\) is tight, which follows from a uniform bound on the first moment:
\[
\sup_{n \ge 1} \int_{\mathbb{R}^d} \|x\| \, v(x,t_n)\, dx < \infty.
\]

The \emph{Donsker--Varadhan variational formula} for the KL divergence \cite{donsker1975asymptotic} gives, for any measurable function \(\phi\) and any $\lambda>0$,
\[
\int_{\mathbb{R}^d} \phi(x) \, v(x,t_n)\, dx 
\;\leq\; 
\frac{1}{\lambda} \mathrm{KL}\!\left(v(t_n)\,\|\,u(t_n)\right)
+ \frac{1}{\lambda} \log \int_{\mathbb{R}^d} e^{\lambda \phi(x)} u(x,t_n)\, dx.
\]
Choosing \(\phi(x) = \|x\|\) and \(\lambda = 1\), we obtain
\[
\int_{\mathbb{R}^d} \|x\| \, v(x,t_n)\, dx 
\;\ \leq\;\underbrace{\mathrm{KL}\!\left(v(t_n)\,\|\,u(t_n)\right)}_{=:\gamma_1}
+ \underbrace{\log \int_{\mathbb{R}^d} e^{\|x\|} u(x,t_n)\, dx}_{=:\gamma_2}.
\]
The term \(\gamma_1\) is uniformly bounded by \eqref{eq:KL-uniform}.  
The term \(\gamma_2\) is uniformly bounded by \eqref{eq:moment-bound}.  
Thus, the first moment is uniformly bounded, implying tightness and hence pre-compactness.

\medskip
\noindent\textbf{Step 2 (Absolute continuity and support of limit measures).}  
By Step~1, passing to a subsequence if necessary, assume \(v(t_n) \rightharpoonup^\ast v^*\) as \(n \to \infty\).  
Since \(u_0\) has compact support, we have \(u(t_n) \to u_0\) in the \(W_2\)-Wasserstein distance \cite[Chp.~6]{villani2008optimal}, and thus also weak-*.

By the lower semicontinuity (in the weak-* topology) of the KL divergence,
\begin{align*}
    \mathrm{KL}\!\left(v^* \,\|\, u_0\right) 
    \leq \liminf_{n \to \infty} \mathrm{KL}\!\left(v(t_n)\,\|\,u(t_n)\right) < \infty.
\end{align*}
Finite relative entropy implies absolute continuity, hence \(v^* \ll u_0\).  
Consequently,
\[
\operatorname{supp}(v^*) \subset \operatorname{supp}(u_0).
\]
The first part of assertion (i) follows.

In the case where \(|\log v_T(x)|\) is bounded by a polynomial function of \(\|x\|\), the KL divergence \(\mathrm{KL}\bigl(u(t)\,\|\,v(t)\bigr)\) remains uniformly bounded by \(\mathrm{KL}\bigl(u(T)\,\|\,v_T\bigr)\), which is finite by Lemma \ref{lm:absolutely-continuous}. Consequently, we have
\begin{equation*}
    \mathrm{KL}\!\left(u_0 \,\|\, v^*\right) 
    \leq \liminf_{n \to \infty} \mathrm{KL}\!\left(u(t_n)\,\|\,v(t_n)\right) 
    < \infty.
\end{equation*}
It follows that
\[
    \operatorname{supp}(u_0) \subset \operatorname{supp}(v^*).
\]
Hence, under this additional assumption, the two support sets coincide.

\medskip
\noindent\textbf{Step 3 (Concentration on the support).}  
Suppose, for contradiction, that the first equality of assertion (ii) fails. Then there exist \(\delta > 0\), an open set \(U \supset \operatorname{supp}(u_0)\), and a sequence \(t_n \to 0^+\) such that
\[
v(t_n)(U) \leq 1 - \delta, \qquad \forall n \in \mathbb{N}.
\]
By Step~1, passing to a subsequence (still denoted \(t_n\)) we may assume \(v(t_n) \rightharpoonup^\ast v^*\).  
By Step~2, \(\operatorname{supp}(v^*) \subset \operatorname{supp}(u_0) \subset U\). Thus,
\[
v^*(U) = 1.
\]
Applying the \emph{Portmanteau theorem} \cite[Thm.~13.16]{klenke2008probability} for the open set \(U\), we have
\[
\liminf_{n \to \infty} v(t_n)(U) \geq v^*(U) = 1,
\]
contradicting \(v(t_n)(U) \leq 1 - \delta\).  
Hence, for any open neighborhood \(U\) of \(\operatorname{supp}(u_0)\),
\[
\lim_{t \to 0^+} v(t)(U) = 1.
\]

Finally, let \(X_t\) denote the solution to the generative SDE~\eqref{intro_eq:generation}. Then \(v(t)\) is precisely the law of \(X_t\), i.e.,
\[
\mathbb{P}(X_t \in A) = v(t)(A), \qquad \forall \text{ Borel sets } A.
\]
Taking \(A = U\) in the above convergence result yields
\[
\lim_{t \to 0^+} \mathbb{P}(X_t \in U) = 1,
\]
establishing assertion (ii) of Theorem~\ref{thm:support}.
\end{proof}

\subsection{Proof of Theorem \ref{thm:convergence_rate}}
For convenience, we recall the generation ODE \eqref{eq:ODE}:
\begin{equation}\label{eq:ODE_proof}
\begin{cases}
\displaystyle \frac{dX_t}{dt} = -\,s(X_t,\,t), & t \in (0,T),\\[0.6em]
X_T = x_T \in \mathbb{R}^d.
\end{cases}
\end{equation}

The overall structure of the proof follows a Lyapunov stability argument for the non-autonomous case. 
It proceeds in three steps, corresponding respectively to assertions (i)--(iii) stated in the theorem.

\medskip
\noindent\textbf{Step 1 (Subsequence convergence).} Consider the ODE \eqref{eq:ODE_proof} with \(x_T\) distributed as \(\mathcal{N}(0,I_d)\), which satisfies Assumption \ref{ass1}(ii).  
Recall that \(S = \operatorname{supp}(u_0)\) and, for \(t \in (0,T)\), define
\[
Y_t \coloneq d(X_t, S).
\]
By Theorem~\ref{thm:support}\,(ii) (in the case \(\epsilon = 0\)), for every \(\delta > 0\),
\[
\mathbb{P}\big( Y_t \geq \delta \big) = 1 - \mathbb{P}\big( Y_t < \delta \big) \xrightarrow[t \to 0^+]{} 0.
\]
Hence, we can choose a deterministic sequence \(t_n \to 0^+\) such that
\[
\mathbb{P}\big( Y_{t_n} > \tfrac{1}{n} \big) \leq 2^{-n}.
\]
By the \textit{Borel--Cantelli} lemma, it follows that \(Y_{t_n} \to 0\) for \(\mathbb{P}\)-almost every \(x_T\).  
Since \(\mathcal{N}(0,I_d)\) and \(\mathcal{L}^d\) are mutually absolutely continuous, the exceptional set is Lebesgue-null. Consequently,
\[
d\big( X_{t_n},\, S \big) \to 0
\]
for Lebesgue-a.e.\ \(x_T \). As a consequence, the lower limit convergence \eqref{eq:liminf_conv} follows.

\medskip
\noindent\textbf{Step 2 (Convergence rate to the convex hull).} 
Recall that
\[
K =  \mathrm{conv}(S) = \mathrm{conv} (\mathrm{supp}(u_0)).
\]
Since $S$ is compact, the convex hull $K$ is also compact. Let
\[
V(x):=\frac12\,d(x,K)^2 \quad \text{and} \quad p(x):=\operatorname{proj}_K(x).
\]
Then $V\in C^1(\mathbb{R}^d)$ with
\[
\nabla V(x)=x-p(x).
\]
Along any trajectory of \eqref{eq:ODE_proof},
\begin{equation}\label{eq:chain}
\frac{d}{dt} V\big( X_t\big)
=\big\langle \nabla V\big( X_t\big),\,\dot{ X}_t\big\rangle
=\big\langle  X_t-p\big( X_t\big),\, -\nabla\log u\big( X_t,t\big)\big\rangle.
\end{equation}
The score function $s$ can be written as the Gaussian mean-shift formula: for any $(x,t)\in Q$,
\[
s(x,t) = \nabla\log u(x,t)=\frac{m(x,t)-x}{2t},\qquad\text{where}\quad 
m(x,t):=\frac{\int y\,e^{-\frac{|x-y|^2}{4t}}\,d u_0(y)}{\int e^{-\frac{|x-y|^2}{4t}}\,d u_0(y)}.
\]
Thus, using \eqref{eq:chain}, we have
\[
\frac{d}{dt} V\big(X_t\big)
= \frac{1}{2t}\,\big\langle X_t - m\big(X_t, t\big),\, X_t - p\big(X_t\big)\big\rangle.
\]
Since \(m(x,t)\) is a barycenter with positive weights supported on \(\operatorname{supp}(u_0)\), it follows that
\(m(x,t) \in K\) for all \((x,t) \in Q\).

\vspace{0.5em}
For any nonempty closed convex set \(A \subset \mathbb{R}^d\), and for any \(x \in \mathbb{R}^d\) and \(y \in A\), the following inequality holds:
\begin{equation*}
    \langle x - y,\; x - \mathrm{proj}_A(x) \rangle \;\ge\; \|x - \mathrm{proj}_A(x)\|^2.
\end{equation*}

\vspace{0.5em}
Applying this inequality with \(A = K\), \(y = m(X_t,t)\), and \(x = X_t\), we obtain
\[
\big\langle X_t - m\big(X_t,t\big),\, X_t - p\big(X_t\big)\big\rangle 
\;\ge\; \big\| X_t - p\big(X_t\big) \big\|^2.
\]
Therefore,
\[
\frac{d}{dt} V\big( X_t\big)\ \ge\ \,\frac{1}{2 t}\,\| X_t-p( X_t)\|^2
= \frac{1}{t}\,V\big( X_t\big).
\]
This implies that $\big(\,V(X_t)/t\,\big)' \ge 0$. Hence,
\[
\frac{V(X_t)}{t} \leq \frac{V(X_T)}{T}\quad \forall\, t\in(0,T).
\]
The desired estimate \eqref{eq:dist-bound} follows immediately from the definition of $V$.

\medskip
\noindent\textbf{Step 3 (Convergence rate in the sum of Dirac case).}
We now assume that the initial datum $u_0$ of the heat equation is in the form of a sum of Dirac measures:
\begin{equation*}
        u_0 = \sum_{k=1}^N w_k\,\delta_{y_k},
    \end{equation*}
with weights \(w_k > 0\) and locations \(y_k \in \mathbb{R}^d\). 
By Step 1, there exists a deterministic sequence \(t_n \to 
0^+\) such that
\[
d\big( X_{t_n},\,\operatorname{supp}(u_0)\big) \longrightarrow 0
\qquad\text{for Lebesgue-a.e.\ } x_T \in \mathbb{R}^d.
\]

Fix any \(x_T \in \mathbb{R}^d\) for which this convergence holds.  
Since \(\operatorname{supp}(u_0)\) consists of finitely many points \(\{y_k\}_{k=1}^N\), 
there exists an index \(i \in \{1,\ldots,N\}\) such that a subsequence of \( X_{t_n}\) converges to \(y_i\).
Define
\begin{equation}\label{eq:gamma}
\gamma := \frac{1}{2}\min_{j\neq i}\|y_i - y_j\|^2 \;>\; 0,
\end{equation}
and introduce the corresponding \emph{$\gamma$-Voronoi core} of \(y_i\) (An illustrative diagram of the $\gamma$-Voronoi core is shown in Figure~\ref{fig:Voronoi}):
\[
V_i(\gamma)
:= \Big\{x\in\mathbb{R}^d : \|x - y_j\|^2 - \|x - y_i\|^2 \ge \gamma \;\; \forall\, j\neq i \Big\}.
\]

\begin{figure}[htbp]
  \centering
  \begin{subfigure}[b]{0.45\textwidth}
    \centering
    \includegraphics[width=\linewidth]{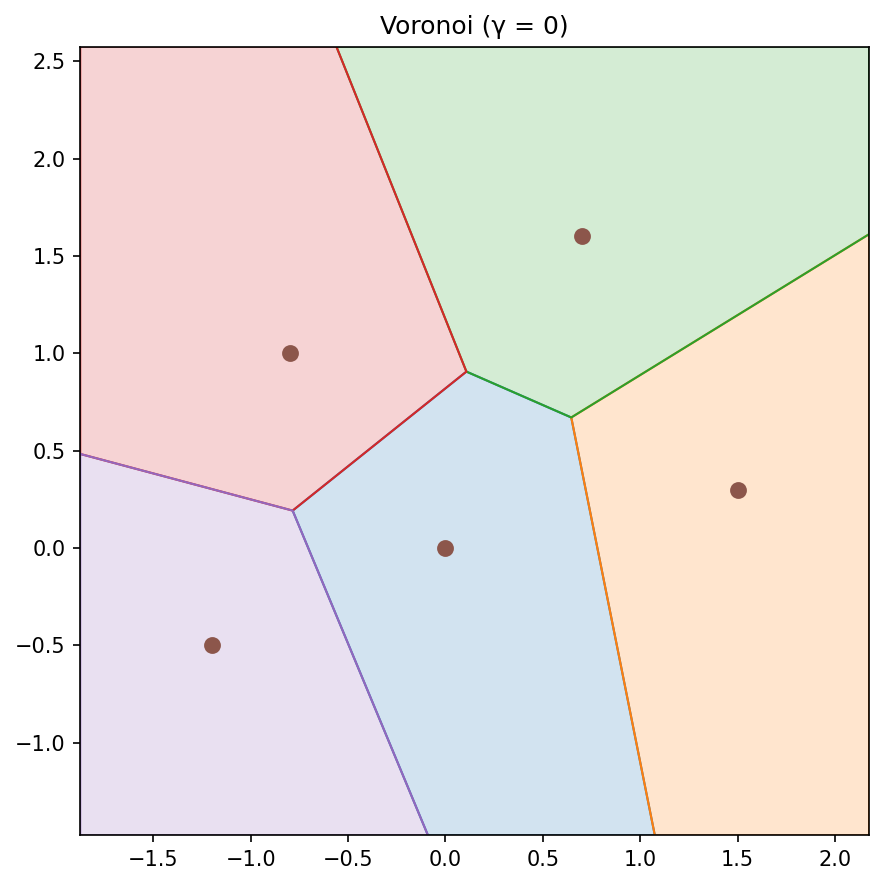}
    \caption{Voronoi diagram ($\gamma = 0$)}
  \end{subfigure}
  \hfill
  \begin{subfigure}[b]{0.45\textwidth}
    \centering
    \includegraphics[width=\linewidth]{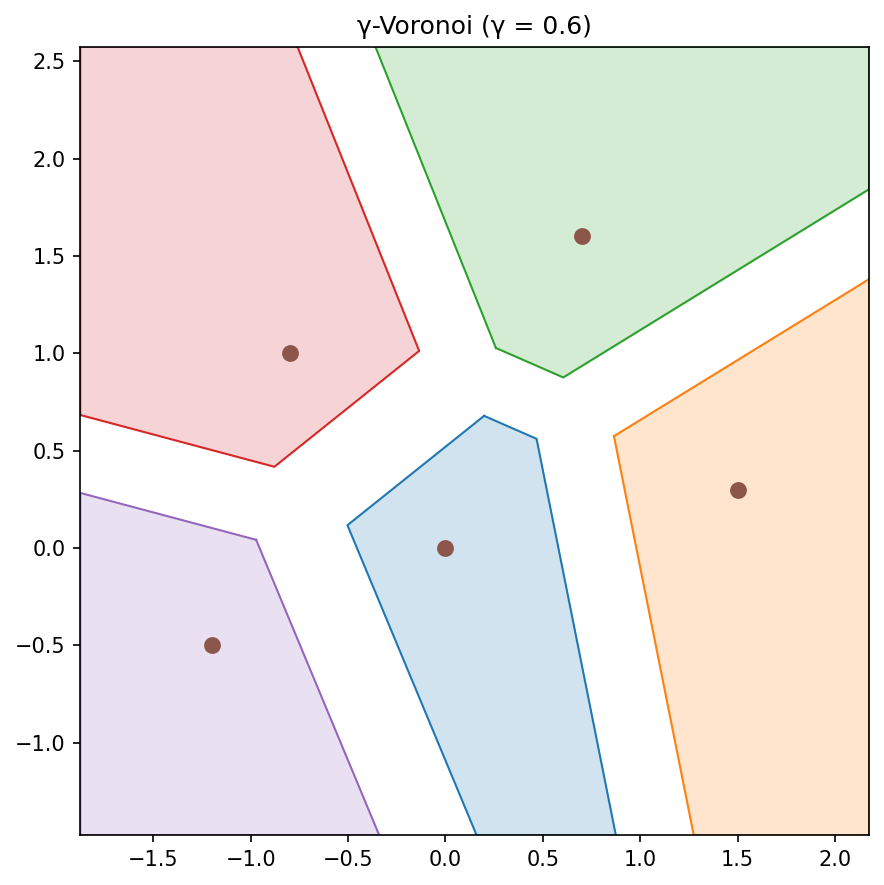}
    \caption{$\gamma$--Voronoi core ($\gamma = 0.6$)}
  \end{subfigure}

  \caption{Examples of $\gamma$--Voronoi core of 5 points in $\R^2$. When $\gamma=0$, we recover the classical Voronoi diagram.}
  \label{fig:Voronoi}
\end{figure}

\medskip
We now make the following two claims (proved in the subsequent proofs):

\begin{enumerate}
\item\textbf{Claim 1 (Varadhan-type concentration for the Gaussian mean shift).} For any $x\in V_i(\gamma)$ and any $t>0$,
\[
\bigl\|m(x,t)-y_i\bigr\|
\;\le\;
\Bigl(\sum_{j\neq i}\frac{w_j}{w_i}\,\|y_j-y_i\|\Bigr)\,
e^{-\frac{\gamma}{4t}},
\]
where
\[
m(x,t)=\frac{\sum_{k=1}^N w_k\,e^{-\frac{|x-y_k|^2}{4 t}}\,y_k}
               {\sum_{k=1}^N w_k\,e^{-\frac{|x-y_k|^2}{4 t}}}.
\]
\item\textbf{Claim 2 (Invariance of the $\gamma$--Voronoi core at late generation times).} There exists $t_\ast\in(0,T)$ such that
\[
 X_t\in V_i(\gamma)\quad\text{for all }t\in(0, t_\ast).
\]
\end{enumerate}

\medskip
Assuming the two claims above, we proceed with the proof. Let \[ Z_t:= X_t-y_i.\] Using $\nabla\log u(x,t)=(m(x,t)-x)/ 2t$, we compute
\[
\frac{d}{dt}\,\frac12\| Z_t\|^2
=\frac{1}{2t}\big(\langle Z_t,\,y_i-m( X_t,t)\rangle + \| Z_t\|^2\big).
\]
By Claims~1-2, for all $t\in(0, t_\ast)$,
\[
\bigl\|m( X_t,t)-y_i\bigr\|
\;\le\;
\underbrace{\Bigl(\sum_{j\neq i}\frac{w_j}{w_i}\,\|y_j-y_i\|\Bigr)}_{=:C_i}\,
e^{-\frac{\gamma}{4t}}.
\]
Hence, for all $t\in(0, t_\ast)$,
\[
\frac{d}{dt}\,\frac12\| Z_t\|^2
\ge \frac{1}{2t}\| Z_t\|^2 - \frac{1}{2t}\|Z_t\|\,C_i\,e^{-\frac{\gamma}{4t}}.
\]
Introduce the similarity time variable \(s := -\log t\) (so that \(ds/dt = -1/t\)) and define \(\rho(s) := \| Z_{t(s)} \|\). 
Using the chain rule,
\[
\frac{d\rho}{ds} = \frac{d\rho}{dt}\frac{dt}{ds} = -\,t\,\frac{d\rho}{dt}.
\]
Substituting the inequality above, we obtain
\[
\frac{d\rho}{ds} \;\le\; -\frac{1}{2}\rho(s) + \frac{C_i}{2}\, e^{-\frac{\gamma}{4} e^{s}}, 
\qquad s \ge s_\ast := -\log t_\ast.
\]
Multiplying both sides by \(e^{s/2}\), we get
\[
\frac{d}{ds}\!\left(e^{\frac{s}{2}}\rho(s)\right) 
\;\le\; \frac{C_i}{2}\, e^{\frac{s}{2}} e^{-\frac{\gamma}{4} e^{s}}.
\]
Integrating this differential inequality from \(s_\ast\) to \(s \ge s_\ast\) yields
\[
e^{\frac{s}{2}}\rho(s) - e^{\frac{s_\ast}{2}}\rho(s_\ast) 
\;\le\; \frac{C_i}{2}\int_{s_\ast}^s e^{\frac{\sigma}{2}} e^{-\frac{\gamma}{4} e^{\sigma}}\, d\sigma.
\]
Thus,
\[
\rho(s) \;\le\; e^{-\frac{s}{2}} e^{\frac{s_\ast}{2}} \rho(s_\ast) 
+ \frac{C_i}{2} e^{-\frac{s}{2}}\int_{s_\ast}^s e^{\frac{\sigma}{2}} e^{-\frac{\gamma}{4} e^{\sigma}}\, d\sigma.
\]
Returning to the original variable \(t = e^{-s}\), we obtain the bound
\[
\| Z_t \| 
\;\le\; \left(\frac{t}{t_\ast}\right)^{\frac12}\| Z_{t_\ast} \|
+ \frac{C_i}{2} t^{\frac12} \int_{-\log t_\ast}^{-\log t} 
e^{\frac{\sigma}{2}} e^{-\frac{\gamma}{4} e^{\sigma}}\, d\sigma.
\]
The integral term is bounded by 
\[
\int_{-\log t_\ast}^{-\log t} 
e^{\frac{\sigma}{2}} e^{-\frac{\gamma}{4} e^{\sigma}}\, d\sigma \leq \int_{\R} e^{\frac{\sigma}{2}} e^{-\frac{\gamma}{4} e^{\sigma}}\, d\sigma = \frac{2\sqrt{\pi}}{\sqrt{\gamma}},
\]
where the last equality is by a change of variable and the Gamma integral.
Therefore,
\[
\| Z_t\|\ \le\ \Big((t_\ast)^{-1/2}\| Z_{t_\ast}\| + C_i\sqrt{\tfrac{\pi}{\gamma}}\Big)\,t^{1/2},
\qquad \forall \,t\in(0,t_{\ast}).
\]
We conclude that $ X_t\to y_i$ as $t\to 0^+$.  
Here, the constants \(t_\ast\), \(C_i\), and \(\gamma\) depend only on the initial point \(x_T\) and the time horizon $T$.
Since \(Z_t\) is continuous, the desired convergence rate~\eqref{eq:conv_1} holds over the entire interval \((0,T)\) upon choosing a sufficiently large constant \(C\).

\medskip
\begin{proof}[Proof of Claim 1] Fix $i$ the index in Step 3 of the proof above.
For any $x\in \R^d$, let us define
\begin{equation}\label{eq:Lyapunov}
    \psi_j(x) \coloneqq \|x-y_j\|^2-\|x-y_i\|^2, \quad  \forall j\neq i.
\end{equation}
By the definition of $V_i(\gamma)$, we have that for any $x\in V_i(\gamma)$,
\[
\psi_j(x) \geq \gamma, \quad  \forall j\neq i.
\]
    Factor the $i$-th term in the numerator and the denominator of $m(x,t)$:
\[
m(x,t)
= \frac{w_i y_i+\sum_{j\neq i} w_j e^{-\frac{\psi_j(x)}{4t}}\,y_j}
       {w_i + \sum_{j\neq i} w_j e^{-\frac{\psi_j(x)}{4t}}}
= y_i \;+\;
\frac{\sum_{j\neq i} w_j e^{-\frac{\psi_j(x)}{4t}}(y_j-y_i)}
     {w_i + \sum_{j\neq i} w_j e^{-\frac{\psi_j(x)}{4t}}}.
\]
Since the denominator is larger than $ w_i$, we obtain
\[
\|m(x,t)-y_i\|
\le \frac{1}{w_i}\sum_{j\neq i} w_j e^{-\frac{\psi_j(x)}{4t}}\,|y_j-y_i|
\le \Bigl(\sum_{j\neq i}\frac{w_j}{w_i}\,|y_j-y_i|\Bigr) e^{-\frac{\gamma}{4t}},
\]
because $e^{-\psi_j(x)/(4t)}\le e^{-\gamma/(4t)}$ for all $j\neq i$.
\end{proof}

\begin{proof}[Proof of Claim 2]
Let $d_{ij}:=\|y_i-y_j\|$ and recall
\[
C_i=\sum_{j\neq i}\frac{w_j}{w_i}\,\|y_j-y_i\|.
\]
Since $\gamma>0$, there exists $\tau_0 \in (0,T)$ such that for all $t\in(0, \tau_0)$,
\begin{equation}\label{eq:C_i}
C_i\,e^{-\frac{\gamma}{4t}}\ \le\ \frac18\,d_{ij}\qquad\forall\,j\neq i.
\end{equation}
Since \(y_i \in \mathring{V}_i(\gamma)\) and the sequence \(\{ X_{t_n}\}_{n\geq 1}\) converge to \(y_i\), there exists \(t_\ast < \tau_0\) such that
\[
 X_{t_\ast} \in V_i(\gamma).
\]

We now prove that \(X_t \in V_i(\gamma)\) for all \(t < t_\ast\).  
Suppose not.  
Then, there exists an \emph{entrance time} \(\tau_1 < t_\ast\), the time at which the trajectory enters \(V_i(\gamma)\) (and thus exits in the backward-time sense), such that
\[
X_{\tau_1} \in \partial V_i(\gamma).
\]
This implies that there exists some \(j \neq i\) such that
\begin{equation}\label{eq:boundary}
    \psi_j\big( X_{\tau_1} \big) = \gamma 
    \quad \text{and} \quad 
    \frac{d}{dt}\psi_j\big( X_{\tau_1} \big) \geq 0,
\end{equation}
where \(\psi_j\) denotes the Lyapunov-type function defined in~\eqref{eq:Lyapunov}.

Differentiating along the trajectory gives
\[
\frac{d}{dt}\psi_j\big( X_{\tau_1}\big)
=\frac{1}{\tau_1}\,\big\langle y_i-y_j,\, X_{\tau_1} - m\big( X_{\tau_1},\tau_1\big)\big\rangle.
\]
Decompose the scalar product:
\begin{align*}
\big\langle y_i-y_j,\, X_{\tau_1} -m\big(X_{\tau_1},\tau_1\big)\big\rangle
&=\big\langle y_i-y_j,\,  y_i - m\big( X_{\tau_1},\tau_1\big)\big\rangle
  +\big\langle y_i-y_j,\, X_{\tau_1} - y_i\big\rangle\\
&=\underbrace{\big\langle y_i-y_j,\, y_i- m\big( X_{\tau_1},\tau_1\big) \big\rangle}_{\text{error}}
 \;+\;\underbrace{\tfrac12\big(\psi_j( X_{\tau_1}) - d_{ij}^2\big)}_{\text{geometric}}.
\end{align*}

Since $ X_{\tau_1}\in\partial V_i(\gamma)\subset V_i(\gamma)$, Claim~1 yields
\begin{equation}\label{eq:error_1}
\big|\big\langle y_i-y_j,\,  y_i -m\big( X_{\tau_1},\tau_1\big)\big\rangle\big|
\ \le\ d_{ij}\,C_i\,e^{-\frac{\gamma}{4\tau_1}}
\ \le\ \frac18\,d_{ij}^2,
\end{equation}
where the last inequality uses \eqref{eq:C_i} and $\tau_1< t_\ast< \tau_0$. 

For the geometric term, since $\psi_j( X_{\tau_1})=\gamma$,
\begin{equation}\label{eq:error_2}
\frac12\big(\psi_j( X_{\tau_1}) - d_{ij}^2\big)
=\frac12\big(\gamma- d_{ij}^2\big)\ \le\ -\frac14\,d_{ij}^2,
\end{equation}
where the final bound follows from \eqref{eq:gamma}. 

Combining \eqref{eq:error_1} to \eqref{eq:error_2} gives
\[
\frac{d}{dt}\psi_j\big( X_{\tau_1}\big)
\ \le\ \frac{1}{\tau_1}\left(\frac18\,d_{ij}^2-\frac14\,d_{ij}^2\right)
=-\frac{1}{8\tau_1}\,d_{ij}^2\ <\ 0.
\]
We obtain a contradiction regarding \eqref{eq:boundary} for the sign of the derivative. Hence, $ X_t\in V_i(\gamma)$ for all $t < t_\ast$.
\end{proof}

\subsection{Proof of technical lemmas}
\label{sec:proof_lemmas}

\begin{proof}[Proof of Lemma \ref{lem:gaussian-bounds}]
We split the proof in three steps for the three bounds in Lemma \ref{lem:gaussian-bounds}.

\medskip
\noindent\textbf{Step 1 (Proof of Gaussian bounds).}
Since \(\mathrm{supp}(u_0) \subset B_R(0)\), we have that for any \(x \in \mathbb{R}^d\),
\[
u(x,t) = \int_{B_R(0)} G_t(x - y) d u_0(y) .
\]
For all \(y \in B_R(0)\), the triangle inequality implies
\[
\|x\|-R\leq \|x\| -\|y\| \leq \|x-y\| \leq \|x\|+\|y\| \leq \|x\|+R,
\]
Therefore,
\[
\exp\!\left(-\frac{(\|x\|+R)^2}{4t}\right) 
\leq \exp\!\left(-\frac{\|x - y\|^2}{4t}\right)
\leq \exp\!\left(-\frac{(\|x\| - R)^2}{4t}\right).
\]
By the formulation of $G_t$ and the fact that\(\int_{B_R(0)} d u_0(y) = 1\), the estimate \eqref{eq:gaussian-bounds} follows.

\medskip
\noindent\textbf{Step 2 (Proof of the Hessian bounds).} 
We now establish \eqref{eq:hessian-bound}. A direct computation yields
\[
\mathrm{Hess}\bigl(\log u(x,t)\bigr) 
= -\frac{1}{2t}\,I_d 
  + \frac{1}{4t^2}\,\mathrm{Cov}(Y_x),
\]
where
\[
\mathrm{Cov}(Y_x)
= \int_{\R^d} y\,y^{\top}\,\frac{G_t(x-y)}{u(x,t)}\,du_0(y)
  - \left(\int_{\R^d} y\,\frac{G_t(x-y)}{u(x,t)}\,du_0(y)\right)
    \left(\int_{\R^d} y\,\frac{G_t(x-y)}{u(x,t)}\,du_0(y)\right)^{\!\top}.
\]
This is precisely the covariance matrix of the random variable 
\(Y_x \sim (G_t(x-\cdot)/u(x,t))\,du_0(\cdot)\).

For any unit vector \(a\in\R^d\), since \(\mathrm{supp}(u_0)\subset B_R(0)\), we have
\[
0 \;\le\; \mathrm{Var}(a^{\top}Y_x)
   \;=\; a^{\top}\mathrm{Cov}(Y_x)a
   \;\le\; \mathbb{E}\bigl[(a^{\top}Y_x)^2\bigr]
   \;\le\; R^2.
\]
Hence,
\[
0 \;\preceq\; \mathrm{Cov}(Y_x) \;\preceq\; R^2 I_d.
\]
Substituting this estimate into the previous expression gives
\[
-\frac{1}{2t}\,I_d 
\;\preceq\;
\mathrm{Hess}\bigl(\log u(x,t)\bigr)
\;\preceq\;
\left(-\frac{1}{2t} + \frac{R^2}{4t^2}\right) I_d,
\]
which proves \eqref{eq:hessian-bound}.

\medskip
\noindent\textbf{Step 3 (Proof of the exponential moment bound).} 
By definition,
\[
\int_{\R^d} e^{\|x\|}\,u(x,t)\,dx
= \int_{\R^d}\!\int_{\R^d} e^{\|x\|}\,G_t(x-y)\,dx\,du_0(y).
\]
Here, the \textit{Fubini} theorem applies because all terms are nonnegative.  
Using the triangle inequality \(\|x\|\le \|y\|+\|x-y\|\), we have
\[
e^{\|x\|}\le e^{\|y\|}\,e^{\|x-y\|}.
\]
With the change of variables \(z = x - y\), this gives
\[
\int_{\R^d} e^{\|x\|}\,u(x,t)\,dx
\le \left(\int_{\R^d} e^{\|y\|}\,du_0(y)\right)
     \!\left(\int_{\R^d} e^{\|z\|}\,G_t(z)\,dz\right)
\le e^{R}\!\int_{\R^d} e^{\|z\|}\,G_t(z)\,dz,
\]
since \(\mathrm{supp}(u_0)\subset B_R(0)\).  

Finally, because \(G_t\) is a Gaussian measure on \(\R^d\), the  
\textit{Fernique} theorem (see \cite{fernique1970integrabilite}) implies that the exponential moment  
\(\int_{\R^d} e^{\|z\|}\,G_t(z)\,dz\) is finite and, moreover, uniformly bounded for \(t\in(0,T]\) by a constant depending only on \(d\) and \(T\).  
This proves the desired estimate~\eqref{eq:moment-bound}.
\end{proof}

\begin{proof}[Proof of Lemma \ref{lm:absolutely-continuous}]
Since \(u_0\) has compact support, the function \(u(T)\) is smooth, strictly positive, and absolutely continuous with respect to the Lebesgue measure \(\mathcal{L}^d\). In particular, we have \(v_T \ll u(T)\).

By the definition of the KL divergence,
\[
\mathrm{KL}\bigl(v_T \,\|\, u(T)\bigr) 
= \int_{\mathbb{R}^d} v_T(x) \log \frac{v_T(x)}{u(x,T)} \, dx 
= \underbrace{\int_{\mathbb{R}^d} v_T(x) \log v_T(x) \, dx}_{\text{negative Shannon entropy}}
- \underbrace{\int_{\mathbb{R}^d} v_T(x) \log u(x,T) \, dx}_{\text{negative cross-entropy}}.
\]

By Assumption~\ref{ass1}, the first term (negative Shannon entropy) is finite.  
To control the second term, using \eqref{eq:gaussian-bounds}, there exists $C> 0$ such that
\[
\left| \int_{\mathbb{R}^d} v_T(x) \log u(x,T) \, dx \right| 
\leq \int_{\mathbb{R}^d} v_T(x) \left( C (1+\|x\|^2)\right) dx, \quad \forall x\in \R^d.
\]  
By Assumption~\ref{ass1}, \(v_T\) has a finite second moment; hence, the integral on the right-hand side is finite.  
This proves that \(\mathrm{KL}(v_T \,\|\, u(T)) < \infty\).

For the second part, consider \(\mathrm{KL}(u(T) \,\|\, v_T)\). The negative Shannon entropy of \(u(T)\) is finite due to the Gaussian-type decay of \(u(T)\). The negative cross-entropy term involves
\[
\int_{\mathbb{R}^d} u(x,T) \log v_T(x) \, dx.
\]
If \(\log v_T(x)\) is uniformly bounded by a polynomial function of \(\|x\|\), then it is integrable against $u(x,T)$. Consequently, \(\mathrm{KL}(u(T) \,\|\, v_T) < \infty\).
\end{proof}

\begin{proof}[Proof of Lemma~\ref{lem:KL}]
The proof consists of two steps.

\noindent\textbf{Step 1 (Data-processing inequality).}
By \eqref{eq:hessian-bound}, the score function 
\[
    s(x,t) = \nabla \log u(x,t)
\]
is Lipschitz continuous with respect to \(x\) for every \(t>0\). Moreover, the Lipschitz constant can be chosen uniformly on any interval \([t, T]\) with \(t>0\). 

Therefore, by the data-processing inequality for the Markov semigroup (see \cite[Theorem~9.8.25 and Remark~9.8.27]{bogachev2022fokker} for the PDE version), we obtain
\begin{equation*}
    \mathrm{KL}\bigl(v(t)\,\|\,u(t)\bigr) 
    \;\leq\; 
    \mathrm{KL}\bigl(v_T\,\|\,u(T)\bigr).
\end{equation*}
By Lemma~\ref{lm:absolutely-continuous}, the right-hand side is finite. 

Furthermore, if \(|\log v_T(x)|\) is bounded above by a polynomial function of \(\|x\|\), then Lemma~\ref{lm:absolutely-continuous} implies that
\[
    \mathrm{KL}\bigl(u(T)\,\|\,v_T\bigr) < \infty.
\]
Applying the data-processing inequality once more yields
\begin{equation*}
    \mathrm{KL}\bigl(u(t)\,\|\,v(t)\bigr) 
    \;\leq\; 
    \mathrm{KL}\bigl(u(T)\,\|\,v_T\bigr) 
    \;<\; \infty.
\end{equation*}

\medskip
\noindent\textbf{Step 2 (Strong data-processing inequality).}
We prove the identity \eqref{KL:identity-1}; the proof of \eqref{KL:identity-2} follows by the same argument upon exchanging \(u\) and \(v\).
For any $t\in(0,T]$, by step 1, $\mathrm{KL} \left(v(t)\,\|\,u(t)\right) < \infty$. Set
\[
w(t) \coloneqq \mathrm{KL} \left(v(t)\,\|\,u(t)\right)
= \int_{\R^d} v(x,t)\,L(x,t)\,dx,
\qquad L(x,t)\coloneqq \log\frac{v(x,t)}{u(x,t)}.
\] 
Differentiating in time and using \(\partial_t L = \frac{\partial_t v}{v} - \frac{\partial_t u}{u}\) yields
\begin{align*}
\frac{d}{dt} w(t)
&= \int_{\R^d} \bigl((L+1)\,\partial_t v - \tfrac{v}{u}\,\partial_t u \bigr)\,dx.
\end{align*}
Here, the interchange of time differentiation and integration follows from the argument in \cite[Prop.~1.6, Appx.~A]{klartag2025strong}, which requires the finiteness of the fourth-order moment of \(u(T)\).  
Since \(u(T)\) is the heat flow of compactly supported initial data, this condition holds.  
For the case of \(\mathrm{KL}\bigl(u(t)\,\|\,v(t)\bigr)\), the same reasoning applies provided that \(v_T\) has a finite fourth-order moment.

Now use the FP equations satisfied by \(v\) and \(u\):
\[
\partial_t v = -\epsilon \Delta v + (1+\epsilon)\,\mathrm{div}(v s),
\qquad
\partial_t u = -\epsilon \Delta u + (1+\epsilon)\,\mathrm{div}(u s).
\]
 Substituting these into the above gives
\begin{align*}
\frac{d}{dt} w(t)
&= - \epsilon \int_{\R^d} (L+1)\,\Delta v\,dx 
+ (1+\epsilon) \int_{\R^d} (L+1)\,\mathrm{div}(v s)\,dx \\
&\quad + \epsilon \int_{\R^d} \frac{v}{u}\,\Delta u\,dx 
- (1+\epsilon) \int_{\R^d} \frac{v}{u}\,\mathrm{div}(u s)\,dx.
\end{align*}

We treat the diffusion and transport contributions separately.

\begin{itemize}
    \item Diffusion part: Integrating by parts, we have
\[
\int_{\R^d} (L+1)\,\Delta v\,dx = - \int_{\R^d} \langle \nabla L, \nabla v \rangle\,dx,
\]
and
\[
\int_{\R^d} \frac{v}{u}\,\Delta u\,dx = -\int_{\R^d} \Big\langle \nabla\!\left(\frac{v}{u}\right), \nabla u \Big\rangle\,dx
= -\int_{\R^d} \left\langle \frac{\nabla v}{u} - \frac{v \nabla u}{u^2},\, \nabla u \right\rangle dx.
\]
Combining these terms yields
\begin{align*}
\mathcal{D}
&= \epsilon \int_{\R^d} \left(\langle \nabla L, \nabla v \rangle - \frac{\langle \nabla v, \nabla u \rangle}{u} + \frac{v}{u^2}\,\|\nabla u\|^2 \right) dx \\
&= \,\epsilon \int_{\R^d} v \left( \frac{\|\nabla v\|^2}{v^2} + \frac{\|\nabla u\|^2}{u^2} - 2\,\frac{\langle \nabla v, \nabla u \rangle}{v\,u} \right) dx \\
&= \,\epsilon \int_{\R^d} v\,\|\nabla L\|^2\,dx,
\end{align*}
since \(\nabla L = \frac{\nabla v}{v} - \frac{\nabla u}{u}\).

\medskip
\item Transport part: Integration by parts gives
\[
\int_{\R^d} (L+1)\,\mathrm{div}(v s)\,dx = - \int_{\R^d} v\,\langle s, \nabla L \rangle\,dx,
\]
and
\[
-\int_{\R^d} \frac{v}{u}\,\mathrm{div}(u s)\,dx = \int_{\R^d} v\,\langle s, \nabla L \rangle\,dx.
\]
These two terms cancel exactly, so the total transport contribution satisfies \(\mathcal{T}=0\).
\end{itemize}

\medskip
Combining the diffusion and transport contributions, we conclude that
\[
\frac{d}{dt} w(t) = \epsilon \int_{\R^d} v(x,t)\,\|\nabla L(x,t)\|^2\,dx.
\]
Since \(L = \log \frac{v}{u}\), this is precisely the identity \eqref{KL:identity-1}.
\end{proof}

\section{Conclusions and perspectives}\label{sec:conclusion}
\subsection{Conclusions}
In this work, we developed a rigorous PDE framework for the analysis of score-based diffusion models.
By reformulating the classical heat equation in terms of its associated score field, we demonstrated that the fundamental behavior of diffusion models can be characterized through the dynamics of a backward Fokker--Planck equation.
A central component of our analysis is the Li--Yau inequality, which yields sharp divergence bounds for the score function and ensures the well-posedness of the backward flow, even in the presence of singularities in the empirical score near $t = 0$.

Building on this foundation, we derived sharp \(L^p\) estimates and established an entropy stability framework based on the Kullback--Leibler divergence. These results allowed us to rigorously characterize the behavior of the reverse-time dynamics and, in particular, to prove a concentration phenomenon: the solutions of the backward flow converge to the data manifold with probability one as \(t \to 0\). In the deterministic setting, we further quantified this concentration rate, showing that it scales as \(\sqrt{t}\) when the initial measure is a finite sum of Dirac masses.

This concentration result provides a fresh perspective on the imitative capacity of diffusion models, unifying various phenomena such as the implicit regularization induced by neural network training, the use of early-stopping strategies, and the introduction of Li--Yau-type regularizers.

The dual \emph{imitation-generation} viewpoint proposed in this work to analyze diffusion models paves the way for a deeper theoretical understanding of generative modeling and for the development of new principles in algorithm design.

Note also that an instructive analogy can be drawn when comparing the classical adjoint methodology in optimal control with the forward-backward structure of diffusion models.
In control theory, the adjoint equation arises from the variational analysis of an optimal control problem, coupling a forward state dynamics with a backward adjoint system that propagates sensitivity information and defines the optimal control.
Similarly, diffusion models involve a forward diffusion process and a backward generative flow, whose interaction encodes the transformation between data and noise distributions.
However, unlike in the adjoint framework, where the backward variable depends functionally on the forward trajectory through a well-defined costate or adjoint equation, in diffusion models, the backward evolution is not an adjoint in the variational sense but rather a probabilistic reversal of the forward stochastic dynamics, guided by the score field.
This distinction highlights a shared mathematical symmetry between the two paradigms, while underlining their fundamentally different interpretative and analytical natures.

\subsection{Perspectives}
Future research directions can be broadly divided into two complementary lines: imitation and generation.

\medskip
On the imitation side, several extensions of the present work are particularly promising:

\begin{itemize}
    \item \textit{Other PDE settings.}  
    The theoretical framework developed in this article can be extended to more general dynamics beyond the linear heat equation or its FP variants. For instance, one may consider forward processes with nonlinear diffusion or transport operators, such as porous media or the  \(p\)-Laplacian. Similar results to those obtained here via the Li--Yau estimate can be established using advanced nonlinear analogues, such as Aronson--B\'enilan-type inequalities for porous-medium equations, the semiconcavity of viscous Hamilton--Jacobi equations, etc. Beyond the intrinsic interest of adapting our analysis to those settings, it would be worthwhile to explore the potential advantages for generative AI.

    \item \textit{Stochastic generation flow and large deviation theory.}  
    The convergence rate established in Theorem~\ref{thm:convergence_rate} concerns the deterministic generation flow \eqref{eq:ODE} associated with the empirical score function. A natural next step is to extend this analysis to the stochastic generation flow \eqref{intro_eq:generation}. Since the diffusion term becomes negligible compared to the drift term of order \(\mathcal{O}(1/t)\) near \(t=0\), one may expect to employ the theory of large deviations \cite{freidlin2012random}, to ensure the concentration phenomena for \eqref{intro_eq:generation} with the empirical score, with high probability. This topic requires substantial further investigation.

    \item \textit{Concentration rates for continuous data distributions.}  
    In the finite-sample setting, the key step in proving the convergence rate is a Varadhan-type concentration result for Gaussian mean-shift distributions (see Claim~1 in the proof). In the continuous-data case, however, establishing such an inequality becomes significantly more challenging and strongly depends on the geometric properties of the boundary of the data manifold. Investigating this continuous setting constitutes an important direction for future work.
\end{itemize}

\medskip
On the generation side, building on the discussions in Section~\ref{sec:discussion}, several research directions naturally emerge:

\begin{itemize}
    \item \textit{Network architecture based on gradient flow.} 
    As discussed earlier, the generative process can be formulated as a gradient flow, where the underlying vector field corresponds to the gradient of the log-density. Consequently, instead of directly learning the score function, a natural and simpler approach is to learn the log-density ($\log u$), which is a scalar potential function, and use its gradient in the learned generative dynamics.
    The number of neurons required to approximate scalar functions is significantly smaller than that needed for approximating high-dimensional vector fields, see~\cite{li2024universal}.
    Combining this gradient-based architecture with the proposed loss functions yields a coherent and well-structured neural network learning framework. Moreover, during both training and generation, the required gradient can be computed efficiently via backpropagation without introducing discretization error. This design thus reduces model complexity while maintaining expressive power. Future work may compare this gradient-based neural architecture with alternative structures, both numerically and theoretically, particularly regarding the stability of the learned dynamics.

    \item \textit{Trade-off between imitation and generation.}  
    This trade-off is central to the early-stopping strategy. A larger choice of \(t_{\text{min}}\) promotes greater generative diversity but reduces fidelity to the original data distribution. Since Theorem~\ref{thm:convergence_rate} provides an upper bound on the distance to the true data manifold, it is meaningful to investigate principled strategies for selecting an optimal \(t_{\text{min}}\), potentially leveraging a priori geometric information about the underlying data manifold.

    \item \textit{Li--Yau type regularization.} 
    Incorporating a penalty term involving the divergence of the neural network score function, as in~\eqref{pb:score-matching-penalty}, is motivated by the Li--Yau stability analysis.  Nevertheless, the most significant contribution from the Li--Yau framework arises from the negative part of the divergence of the score function, as demonstrated in the energy estimate. It is therefore of particular interest to compare two regularization strategies: penalizing only the negative part of $\mathrm{div}(s)$ or its full modulus, as in~\eqref{pb:score-matching-penalty}. A deeper theoretical and numerical investigation is required to understand how such regularization influences the learned score function and how the choice of its hyperparameter affects the trade-off between data imitation and sample generation.
\end{itemize}

\end{document}